\numberwithin{equation}{section}
\numberwithin{figure}{section}
\newtheorem{theorem}{Theorem}[section]
\newtheorem{remark}{Remark}[section]
\newtheorem{lemma}{Lemma}[section]
\author[Y.-Z. Chen]{Yazhou Chen}
\address{College of Mathematics and Physics, Beijing University of
Chemical Technology, Beijing 100029, China}
\email{chenyz@mail.buct.edu.cn}
\author[Q.-L. He]{Qiaolin He}
\address{School of Mathematics, Sichuan University, Sichuan 610065, China}
\email{qlhejenny@scu.edu.cn}
\author[B. Huang]{Bin Huang}
\address{College of Mathematics and Physics, Beijing University of
Chemical Technology, Beijing 100029, China}
\email{abinhuang@gmail.com}
\author[X.-D. Shi]{Xiaoding Shi}
\address{College of Mathematics and Physics, Beijing University of
Chemical Technology, Beijing 100029, China}
\email{shixd@mail.buct.edu.cn}
\title[Thermodynamic Compressible Diffuse Interface Model]
{Global Strong Solution to a Thermodynamic Compressible Diffuse Interface Model with Temperature Dependent Heat-conductivity in 1-D}
\keywords{compressible Navier-Stokes equations, Allen-Cahn equation, diffuse interface,
immiscible two-phase flow, global strong solution.}
\subjclass[2010]{35Q30, 76T30, 35C20}
\date{\today}
\begin{document}
\begin{abstract}
In this paper, we investigate the wellposedness of the non-isentropic compressible Navier-Stokes/Allen-Cahn system with the heat-conductivity proportional to a positive power of the temperature.
This system describes the flow of a two-phase immiscible heat-conducting viscous compressible mixture. The phases are allowed to shrink or grow due to changes of density in the fluid and incorporates their transport with the current.
We established the global existence and uniqueness of strong solutions for this system in 1-D, which means no phase separation, vacuum, shock wave,  mass or heat or phase concentration will be developed in finite time, although the motion of the two-phase immiscible flow has large oscillations and the interaction between the hydrodynamic and phase-field effects is complex. Our result can be regarded as a natural generalization of the Kazhikhov-Shelukhin's result ([Kazhikhov-Shelukhin. J. Appl. Math. Mech. 41 (1977)]) for the compressible single-phase flow with constant heat conductivity to the  non-isentropic compressible immiscible two-phase flow with degenerate and nonlinear heat conductivity.
\end{abstract}

\maketitle

\section{Introduction}
\label{sec:The compressible phase-field model}
The immiscible two-phase flows appear widely in  many fields, such as thermal power engineering, nuclear energy engineering, cryogenic engineering and aerospace, etc. An important feature of immiscible two-phase flow is the coexistence of two fluids with different phase states or components and the existence of the interface.
The theoretical analysis of the immiscible two-phase flow is much more difficult than that of single-phase flow. A common treatment method is the so-called separated flow model, which holds that the concept and method of single-phase flow can be applied to each phase of the two-phase flow system,  and the interface properties and motion between the two phases are considered.

Understanding the geometry and distribution of the interface is very important for determining the flow of immiscible two-phase flow.
The treatment of such two-phase flow interface is derived from the idea of physicist J.D. Van der Waals  \cite{V1894}, who regarded the interface of immiscible two-phase flow as a region with a certain thickness. Mathematical models based on this idea are often called diffusion interface models, such as the famous Navier-Stokes/Allen-Cahn system, which can be used to study the immiscible two-phase flow, such as phase transformation, chemical reactions, etc. see\cite{AC1979}--\cite{HMR2012}, \cite{K2012}, \cite{LT-1998}, %\cite{WX2017}, \cite{ZWM}
and the references therein. In these literatures, by introducing diffusion interface instead of sharp interface, the authors can  avoid the difficulty of processing interface boundary conditions.

Here is a brief review of the model. For compressible immiscible two-phase flow, taking any one of the volume particles in the flow, we assume $M_i$ the mass of the components in the representative material volume $V$, $\phi_i=\frac{\rho_i}{\rho}$ the mass concentration, $\rho_i=\frac{M_i}{V}$  the apparent mass density of the fluid $i~(i=1,2)$. The total density is given by $\rho=\rho_1+\rho_2$, and $\phi=\phi_1-\phi_2$. We call $\phi$ the difference of the two components for the fluid mixture. Obviously, $\phi$  describes the distribution of the interface. Therefore, on this basis,  Blesgen \cite{B1999},  Heida-M$\mathrm{\acute{a}}$lek-Rajagopal  \cite{HMR2012} coupled the Navier-Stokes system  describing the flow of the single fluid and the Allen-Cahn equation describing the change of the phase field (Allen-Cahn \cite{AC1979}),  the Navier-Stokes/Allen-Cahn system is proposed (hereinafter called as NSAC equations):
\begin{equation}\label{original NSAC}
\left\{\begin{array}{llll}
\displaystyle \rho_{t}+\textrm{div}(\rho \mathbf{u})=0,\\
\displaystyle (\rho \mathbf{u})_{t}+\mathrm{div}\big(\rho \mathbf{u}\otimes \mathbf{u}\big)=\mathrm{div}\mathbb{T},
  \\
\displaystyle(\rho\phi)_{t}+\mathrm{div}\big(\rho\phi \mathbf{u}\big)=-\mu,\\
\displaystyle\rho\mu=\rho\frac{\partial f}{\partial \phi}-\mathrm{div}\big(\rho\frac{\partial f}{\partial \nabla\phi}\big),\\
\displaystyle(\rho E)_{t}+\mathrm{div}(\rho E\mathbf{u})=\mathrm{div}\big(\mathbb{T}\mathbf{u}+\kappa(\theta)\nabla \theta-\mu\frac{\partial f}{\partial \nabla\phi}\big),
\end{array}\right.
\end{equation}
where $ \tilde{\mathbf{x}}\in \Omega \subset \mathds{R}^N $,  $N$ is the spatial dimension, $ t>0$ is the time. The unknown functions $\rho(\tilde{\mathbf{x}},t)$, $\mathbf{u}(\tilde{\mathbf{x}},t)$, $\phi(\tilde{\mathbf{x}},t)$, $\theta(\tilde{\mathbf{x}},t)$  denote the total density, the velocity, the difference of the two components for the fluid mixture, and the absolute temperature respectively. $\mu(\tilde{\mathbf{x}},t)$ is called the chemical potential of the fluid. $\epsilon>0$ is the thickness of the diffuse interface. $\mathrm{div}$ and $\nabla$ are the divergence operator and gradient operator respectively. The Cauchy stress-tensor is represented by
\begin{equation}\label{T}
\mathbb{T}=2\nu\mathbb{D}(\mathbf{u})+\lambda(\mathrm{div}\mathbf{u})\mathbb{I}-p\mathbb{I}-\rho\nabla\phi\otimes\frac{\partial f}{\partial\nabla\phi},
\end{equation}
where $f$ is  the  fluid-fluid interfacial free energy density,  and it has the following form (Lowengrub-Truskinovsky \cite{LT-1998},  Heida-M$\mathrm{\acute{a}}$lek-Rajagopal \cite{HMR2012}):
 \begin{equation}\label{free energy density}
 f(\rho,\phi,\nabla\phi)\overset{\text{def}}{=}\frac{1}{4\epsilon}(1-\phi^2)^2+\frac{\epsilon}{2\rho}|\nabla \phi|^2,
\end{equation}
 $\mathbb{I}$ is the unit matrix, $\mathbb{D}\mathbf{u}$ is the so-called deformation tensor
\begin{equation}\label{Du}
  \mathbb{D}\mathbf{u}=\frac{1}{2}\big(\nabla \mathbf{u} +\nabla^{\top} \mathbf{u}\big),
\end{equation}
here and hereafter, superscript $\top$  denotes the transpose and all vectors are column ones.  $\nu>0,\lambda>0$ are  viscosity coefficients, satisfying
\begin{equation}\label{nu}
 \nu>0,\ \ \lambda+\frac{2}{N}\nu\geq0.
\end{equation}
$\kappa(\theta)$ is the
heat conductivity satisfying
\begin{equation}\label{kappa}
 \kappa(\theta)=\tilde{\kappa}\theta^\beta,
\end{equation}
with constants $\tilde{\kappa}>0$ and $\beta>0$.
The  total energy density $\rho E$ is given by
\begin{equation}\label{total energy density}
 \rho E=\rho e+\rho f+\frac{1}{2}\rho\mathbf{u}^2,
\end{equation}
where $\rho e$ is the internal energy, $\frac{\rho\mathbf{u}^2}{2}$ is the kinetic energy,
$p=p(\rho,\theta)$, $e=e(\rho,\theta)$ and $f=f(\rho,\phi,\nabla\phi)$ obey the  second law of thermodynamics (Lions \cite{Lions1998}):
\begin{equation}\label{second law of thermodynamics}
  ds=\frac{1}{\theta}\big(d(e+f)+pd(\frac{1}{\rho})\big),
\end{equation}
where $s$ is the entropy. Then we deduce from \eqref{second law of thermodynamics}:
\begin{equation}\label{relation}
  \frac{\partial s}{\partial \theta}=\frac{1}{\theta}\frac{\partial (e+f)}{\partial \theta},\ \ \frac{\partial s}{\partial \rho}=\frac{1}{\theta}\big(\frac{\partial (e+f)}{\partial \rho}-\frac{p}{\rho^2}\big),
\end{equation}
 which implies the following compatibility equation
\begin{equation}\label{pressure}
p=\rho^2\frac{\partial (e+f)}{\partial \rho}+\theta\frac{\partial p}{\partial \theta}=\rho^2\frac{\partial e(\rho,\theta)}{\partial \rho}-\frac\epsilon2|\nabla\phi|^2+\theta\frac{\partial p}{\partial \theta}.
\end{equation}
Throughout the paper, we concentrate on ideal polytropic gas, that is, $p$ satisfies
\begin{equation}\label{pressure p}
p(\rho,\theta)=R\rho\theta-\frac\epsilon2|\nabla\phi|^2,
\end{equation}
and
 $e$ satisfies
\begin{equation}\label{e}
  e=c_v\theta+\mathrm{constant},
\end{equation}
where $c_v$ is the specific heat capacity. Substituting  \eqref{T}, \eqref{total energy density}, \eqref{free energy density}, \eqref{pressure} and \eqref{e} into \eqref{original NSAC}, then the NSAC system is simplified to
\begin{equation}\label{NSFAC3d}
\left\{\begin{array}{llll}
\displaystyle \rho_{t}+\textrm{div}(\rho \mathbf{u})=0,\\
\displaystyle \rho\mathbf{u}_{t}+\rho(\mathbf{u}\cdot\nabla)\mathbf{u}-2\tilde{\nu}\mathrm{div} \mathbb{D}\mathbf{u}-\tilde{\lambda}\nabla\mathrm{div}\mathbf{u}=-\mathrm{div}\big(\epsilon\nabla\phi\otimes\nabla\phi-\frac{\epsilon}{2}|\nabla\phi|^2+\theta\frac{\partial p}{\partial \theta}\big), \\
\displaystyle \rho\phi_{t}+\rho \mathbf{u}\cdot\nabla\phi=-\mu,\\
\displaystyle \rho\mu=\frac{\rho}{\epsilon}(\phi^3-\phi)-\epsilon \Delta\phi,\\
\displaystyle c_v\big(\rho\theta_{t}+\rho \mathbf{u}\cdot\nabla\theta\big)+\theta p_\theta\mathrm{div}\mathbf{u}-\mathrm{div}(\kappa(\theta)\nabla \theta)=2\tilde{\nu}|\mathbb{D}\mathbf{u}|^2+\tilde{\lambda}(\mathrm{div}\mathbf{u})^2+\mu^2.
\end{array}\right.
\end{equation}
In this paper, we consider the one-dimensional problem of the system \eqref{NSFAC3d}:
\begin{equation}\label{NSFAC-euler}
\left\{\begin{array}{llll}
\displaystyle \rho_{t}+(\rho u)_{\tilde{x}}=0,\\
\displaystyle \rho u_{t}+\rho uu_{\tilde{x}}+(R\rho\theta)_{\tilde{x}}=\nu u_{\tilde{x}\tilde{x}}-\frac\epsilon2(\phi_{\tilde{x}}^2)_{\tilde{x}}, \\
\displaystyle \rho\phi_{t}+\rho u\phi_{\tilde{x}}=-\mu,\\
\displaystyle \rho\mu=\frac{\rho}{\epsilon}(\phi^3-\phi)-\epsilon \phi_{\tilde{x}\tilde{x}},\\
\displaystyle c_v(\rho\partial_{t}\theta+\rho u\theta_{\tilde{x}})+R\rho\theta u_{\tilde{x}}-(\kappa(\theta)\theta_{\tilde{x}})_{\tilde{x}}=\nu u_{\tilde{x}}^2+\mu^2,
\end{array}\right.
\end{equation}
for $(\tilde{x},t)\in[0,1]\times[0,+\infty)$, where $\nu=2\tilde\nu+\tilde\lambda>0$.
The initial boundary conditions are following
\begin{equation}\label{NSFAC-euler-boundary}
  (u,\phi_{\tilde{x}},\theta_{\tilde{x}})(0,t)=(u,\phi_{\tilde{x}},\theta_{\tilde{x}})(1,t)=0,
\end{equation}
\begin{equation}\label{NSFAC-euler-initial}
 (\rho,u,\theta,\phi)\big|_{t=0}=(\rho_0,u_0,\theta_0,\phi_0), \ \ \tilde{x}\in(0,1).
\end{equation}
Without loss of generality, we assume that
\begin{equation}\label{rho}
  \int_0^1\rho_0(\tilde{x})d\tilde{x}=1,
\end{equation}
and
\begin{equation}\label{constant}
\nu=R=c_v=\tilde{\kappa}=1.
\end{equation}
In Lagrange coordinates
\begin{equation}\label{Lagrange coordinate}
 x=\int_0^{\tilde{x}}\rho(\xi,t)d\xi,
\end{equation}
the system \eqref{NSFAC-euler}--\eqref{NSFAC-euler-initial} can be rewritten as
\begin{equation}\label{NSFAC-Lagrange}
\left\{\begin{array}{llll}
\displaystyle v_t-u_x=0,\\
\displaystyle u_t+(\frac{\theta}{v})_x=(\frac{u_{x}}{v})_x-\frac\epsilon2(\frac{\phi_x^2}{v^2})_x, \\
\displaystyle \phi_t=-v\mu,\\
\displaystyle \mu=\frac{1}{\epsilon}(\phi^3-\phi)-\epsilon(\frac{\phi_x}{v})_x,\\
\displaystyle \theta_t+\frac{\theta}{v}u_x-(\frac{\theta^\beta\theta_x}{v})_x=\nu\frac{u_x^2}{v}+v\mu^2,
\end{array}\right.
\end{equation}
with the boundary condition
\begin{equation}\label{boundary condition}
 (u,\phi_x,\theta_x)(0,t)=(u,\phi_x,\theta_x)(1,t)=0, \quad t\geq0.
\end{equation}
and the initial value condition
\begin{equation}\label{initial condition}
 (v,u,\theta,\phi)(x,0)=(v_0,u_0,\theta_0,\phi_0)(x),  \qquad x\in(0,1),
\end{equation}
where
$$v=\frac1\rho.$$

There are a lot of works on the research of the global existence and large time behavior of solutions to the compressible heat-conducting Navier-Stokes system. Kazhikhov-Shelukhin \cite{KS1977} first proposed the global existence of solutions with large initial data. In the year since, significant progress has been made, see \cite{AZ1989}--\cite{AKM1990},  \cite{J1996}, \cite{J1998}, \cite{K1987} and the references therein. It should be noted that these works are given under the assumption that the heat conductivity is a positive constant.  If $\kappa$ depends on temperature, Kawohl \cite{K1985}, Jiang \cite{J1994-1,J1994-2}  and Wang \cite{W2003} established the global existence of smooth solutions for compressible heat-conducting Navier-Stokes system with boundary condition of $(u,\theta_x)(0,t)=(u,\theta_x)(1,t)=0$.  and the methods used there relies heavily on the non-degeneracy of  the heat conductivity $\kappa$  which  cannot be applied directly to  the degenerate and nonlinear case  ($\beta>0$).  Pan-Zhang\cite{PZ2015} generalize the above results to the degenerate case \eqref{kappa} where $\beta\in(0,\infty)$, and more improvement results associated with this degenerate case, please refer to Duan-Guo-Zhu \cite{DGZ2017}, Huang-Shi-Sun \cite{HSS2020}, Huang-Shi \cite{SH2019} and the references therein.

In terms of compressible two-phase flows, so far as we know, most of the work focuses on the isentropic case. Feireisl-Petzeltov$\acute{a}$-Rocca-Schimperna \cite{FPRS2010}, Chen-Wen-Zhu \cite{CWZ2019}  proved the global existence of the weak solution one after another, the method the used is based on the renormalization weak solution framework for the compressible Navier-Stokes system introduced by Lions \cite{Lions1998}. Kotschote \cite{K2012} showed the existence and uniqueness of local strong solutions for arbitrary initial data. Ding-Li-Luo \cite{DLL2013} established the existence and uniqueness of global strong solution in 1D for initial density without vacuum states. Chen-Guo \cite{CG2017} generalized  the result of Ding-Li-Luo \cite{DLL2013} to the case that the initial vacuum is allowed.

%So far as we know, there are only a few studies on compressible non-isentropic two-phase immiscible  mixture,
%and all of them confined to incompressible fluids, see Sun-Liu-Xu \cite{SLX2009},  Wu \cite{Wu2017}, Eleuteria-Roccabc-Schimperna \cite{ERS2016} and the reference therein.
In this paper,  we   focus on  the non-isentropic compressible flow for two-phase immiscible mixture.
Our purpose is to study the existence and uniqueness of global strong solution for the non-isentropic NSAC systems  \eqref{NSFAC-Lagrange} even with large initial data.
More specifically, for general initial conditions without vacuum state, we study the global existence of the solution for the system  \eqref{NSFAC-Lagrange}--\eqref{initial condition}.
%Moreover we show that the phase variable $\phi$  belongs to the physical interval $[-1,1]$. Some new techniques are developed to establish the up and low bounds of the density $v$ and $\theta$.
 Now we give our main result as following

\begin{theorem}\label{thm-global}
 Assume that
\begin{equation}\label{condition 1}
  (v_0,\theta_0)\in H^1(0,1),\ \  \phi_0\in H^2(0,1),\ \ u_0\in H_0^1(0,1),
\end{equation}
and
\begin{equation}\label{condition 2}
  \inf_{x\in(0,1)}v_0(x)>0,\ \ \inf_{x\in(0,1)}\theta_0(x)>0,\ \ \phi_0(x)\in[-1,1].
\end{equation}
Then,  the initial boundary value problem \eqref{NSFAC-Lagrange}-\eqref{initial condition} has a unique strong solution $(v,u,\theta,\phi)$ such that for fixed $T>0$, satisfying
\begin{equation}
\left\{\begin{array}{llll}
 \displaystyle v,\theta\in L^\infty(0,T;H^1(0,1)),\ \ u,\phi_x\in L^\infty(0,T;H_0^1(0,1)), \\
 \displaystyle  v_t\in L^\infty(0,T;L^2(0,1))\cap L^2(0,T;H^1(0,1)), \\
 \displaystyle \phi_t\in L^\infty(0,T;L^2(0,1))\cap L^2(0,T;H^1(0,1)), \\
\displaystyle  u_t,\theta_t,\phi_{xt},,u_{xx},\theta_{xx},\phi_{xxx}\in L^2((0,1)\times(0,T)).
\end{array}\right.
\end{equation}
Moreover, there exist a positive constant $C$ depending on the initial data and $T$, satisfying
\begin{equation}\label{upper and lower bound}
C^{-1}\leq v(x,t)\leq C,\ \ C^{-1}\leq \theta(x,t)\leq C,\ \ \phi(x,t)\in[-1,1].
\end{equation}
\end{theorem}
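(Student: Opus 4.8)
The plan is to prove Theorem \ref{thm-global} by the classical continuation scheme: construct a local strong solution, derive a priori estimates that are uniform on any fixed interval $[0,T]$, and then extend the local solution to $[0,T]$. Local existence on a short interval $[0,T_*)$ in the spaces dictated by \eqref{condition 1}--\eqref{condition 2} follows from a linearization and contraction-mapping argument, so the crux is the a priori bounds \eqref{upper and lower bound} together with the higher regularity, which I outline below.

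First I would extract the two structural estimates that partly decouple from the hydrodynamics. The phase field obeys a maximum principle: since the reaction term $\frac{1}{\epsilon}(\phi^3-\phi)$ points inward at $\phi=\pm1$, the third and fourth equations of \eqref{NSFAC-Lagrange} force $\phi(x,t)\in[-1,1]$ for all $t$ whenever $\phi_0\in[-1,1]$. Next, integrating the total-energy balance over $(0,1)$ and using \eqref{boundary condition} yields conservation of the total energy and hence the basic estimate
\begin{equation}
\sup_{0\le t\le T}\int_0^1\Big(\frac{u^2}{2}+\theta+\frac{\epsilon}{2}\frac{\phi_x^2}{v}+\frac{1}{4\epsilon}(1-\phi^2)^2\Big)\,dx\le C,
\end{equation}
controlling $u$ in $L^\infty_tL^2_x$, $\theta$ in $L^\infty_tL^1_x$, and the interfacial energy in $L^\infty_t$. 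The complementary entropy balance (or direct testing of the momentum and Allen--Cahn equations) supplies the viscous and phase-field dissipation, i.e. bounds on $\int_0^T\!\int_0^1\big(u_x^2/v+v\mu^2\big)\,dx\,dt$ after the upper bound on $\theta$ is in hand.

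The next and most delicate step is the pointwise control $C^{-1}\le v\le C$. Following the Kazhikhov--Shelukhin device, I would rewrite the momentum equation as $u_t=\sigma_x$ with effective flux $\sigma=\frac{u_x}{v}-\frac{\theta}{v}-\frac{\epsilon}{2}\frac{\phi_x^2}{v^2}$, use the identity $u_x/v=(\log v)_t$ coming from $v_t=u_x$, and integrate in $x$ to produce a representation formula expressing $v$ as a product of exponential factors times an explicit integral involving $\theta$ and $\phi_x^2$. Closing this representation requires the uniform bound on $\int_0^1\theta\,dx$ already obtained, together with control of the interfacial contribution $\int_0^1\phi_x^2/v^2\,dx$; the latter is the genuinely new feature compared with the single-phase case and must be fed back from the Allen--Cahn energy, which is the subtle point of this step. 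This yields the uniform upper and lower bounds on $v$.

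With $v$ pinned between two constants and $\phi\in[-1,1]$, the remaining task is the temperature, and here the degeneracy $\kappa(\theta)=\theta^\beta$ with $\beta>0$ is the principal obstacle: the constant-conductivity arguments of Kazhikhov--Shelukhin break down when the diffusion vanishes as $\theta\to0$. I expect to adapt the degenerate-conductivity techniques of Pan--Zhang \cite{PZ2015}, combining entropy-type estimates with the auxiliary variable $\int_0^\theta s^\beta\,ds$ to recover parabolic control of $\theta$, while absorbing the phase-field sources $\nu u_x^2/v+v\mu^2$ using the dissipation in hand; this gives $C^{-1}\le\theta\le C$. The $H^1$ bounds on $v,u,\theta,\phi_x$, the parabolic regularity for $\phi$ via the Allen--Cahn equation, and finally the bounds on $u_t,\theta_t,\phi_{xt},u_{xx},\theta_{xx},\phi_{xxx}$ then follow by differentiating the equations and applying standard energy methods. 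These a priori bounds preclude finite-time blow-up, so the local solution continues to $[0,T]$, and uniqueness follows from a Gronwall estimate on the difference of two solutions. The main difficulty throughout is the simultaneous handling of the degenerate heat conductivity and the phase-field gradient $\phi_x$, whose coupling enters both the effective viscous flux and the temperature equation.
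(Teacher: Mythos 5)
Your outline follows essentially the same route as the paper: conservation of total energy plus the entropy-type inequality, the Kazhikhov--Shelukhin representation of $v$ built from the effective flux $\sigma=\frac{u_x}{v}-\frac{\theta}{v}-\frac{\epsilon}{2}\frac{\phi_x^2}{v^2}$, a Pan--Zhang-style treatment of the degenerate conductivity (multipliers $\theta^{-p}$, $\theta^{-\alpha}$ and $\theta^{\beta}\theta_t$), the maximum principle for $\phi$, and a continuation argument. The ``subtle point'' you flag---closing the representation formula against the interfacial term---is handled in the paper exactly as you anticipate, by using the Allen--Cahn equation to derive $\max_{x}\bigl(\phi_x/v\bigr)^2\le C\bigl(\max_{x}\theta+1+V(t)\bigr)$, after which Gronwall's inequality gives the upper bound on $v$.
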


\begin{remark}\label{rem:1-1}
Our result can be regarded as a natural generalization of the Kazhikhov-Shelukhin's result (\cite{KS1977}) for the compressible single-phase flow with constant heat conductivity to the  non-isentropic compressible immiscible two-phase flow with degenerate and nonlinear heat conductivity.
\end{remark}

\begin{remark}\label{rem:1-2}
The global existence and uniqueness of strong solutions for NSAC system \eqref{NSFAC-Lagrange} means no vacuum, phase separation, shock wave, mass or heat or phase concentration will be developed in finite time, although the motion of the two-phase immiscible flow has large oscillations and the interaction between the hydrodynamic and phase-field effects is complex.
\end{remark}

Now we give some notes on the proof of the main theorem. The key to the proof is to get the upper and lower bounds of $v,\theta$ and $\phi$, see \eqref{bound of density}, \eqref{bound of phi}, \eqref{the upper bound estimate for temperature}. Inspired by the idea of Kazhikhov \cite{K1981}, we first obtain a key expression of $v$, see \eqref{expression of v}, combining with the energy inequality \eqref{basic energy inequality}, Jensen's inequality, we obtain the positive lower bound of $v$ and $\theta$. Then after getting the upper and lower bounds on $\phi$, observing that $\max_{x\in[0,1]}\big(\frac{\phi_x}{v}\big)^2(x,t)$ can be bounded by $C\big(\max_{x\in[0,1]} \theta (x,t)+1+V(t)\big)$ (see \eqref{The square modulus of the first derivative for phi}), we obtain the upper bound of $v$ by the expression of $v$. Further, after observing that the inequalities \eqref{bound of the square modulus of the first derivative} and \eqref{bound of the square modulus of the third derivative for phi}, we get the estimates on the $L^\infty(0,T;L^2)$-norm of $\theta_x$, and the upper bound of temperature $\theta$ is achieved. The details of the proof will be shown in the next section.
\section{The Proof of Theorem}
  The following Lemma is the existence and uniqueness of local strong solutions which can be obtained  by the fixed point method. From here to the end of this paper,  $C>0 $ denotes  the
generic positive constant  depending only on $\|(v_0,u_0,\theta_0\|_{H^1(0,1)}$, $\|\phi_0\|_ {H^2(0,1)}$, $\inf\limits_{x\in [0,1]}v_0(x)$, and $ \inf\limits_{x\in [0,1]}\theta_0(x)$. Moreover, by using conservation of energy, combining with \eqref{rho},  without loss of generality, the following assumption is given
\begin{equation}\label{initial energy}
  \int_0^1v_0dx=1,\ \ \int_0^1\big(\frac {u_0^2}{2}+\theta_0+\frac{1}{4\epsilon}(\phi_0^2-1)^2+\frac{\epsilon}{2}\frac{\phi_{0x}^2}{v_0}\big)dx=1.
\end{equation}
\begin{lemma}\label{Local existence}
Let \eqref{condition 1} and \eqref{condition 2}  hold. Then there exists some $T_*>0$ such that the initial boundary value problem \eqref{NSFAC-Lagrange}-\eqref{initial condition} has a unique strong solution $(v,u,\theta,\phi)$  satisfying
\begin{equation}\label{local solution space}
\left\{\begin{array}{llll}
 \displaystyle v,\theta\in L^\infty(0,T_*;H^1(0,1)),\ \ u,\phi_x\in L^\infty(0,T_*;H_0^1(0,1)), \\
 \displaystyle  v_t\in L^\infty(0,T_*;L^2(0,1))\cap L^2(0,T_*;H^1(0,1)), \\
 \displaystyle \phi_t\in L^\infty(0,T_*;L^2(0,1))\cap L^2(0,T_*;H^1(0,1)), \\
\displaystyle  u_t,\theta_t,\phi_{xt},u_{xx},\theta_{xx},\phi_{xxx}\in L^2((0,1)\times(0,T_*)).
\end{array}\right.\end{equation}
\end{lemma}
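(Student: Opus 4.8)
The plan is to construct the local solution by a standard linearization-and-contraction argument, exploiting the fact that on a sufficiently short time interval neither the degeneracy of the heat conductivity nor the loss of the lower bound on $v$ can occur. First I would fix a time $T_*>0$ (to be chosen small) and introduce the complete metric space
\[
X_{T_*}=\Big\{(v,u,\theta,\phi): v,\theta\in L^\infty(0,T_*;H^1),\ u,\phi_x\in L^\infty(0,T_*;H_0^1),\ \ldots\Big\}
\]
equipped with the norm associated to the regularity listed in \eqref{local solution space}, together with the closed ball $B_R\subset X_{T_*}$ of radius $R$ (depending on the initial data) centered at the constant-in-time extension of $(v_0,u_0,\theta_0,\phi_0)$. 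Because $\inf v_0>0$ and $\inf\theta_0>0$ by \eqref{condition 2}, for every element of $B_R$ and for $T_*$ small the functions $v$ and $\theta$ remain in a fixed interval bounded away from $0$ and $\infty$; consequently the coefficients $1/v$ and $\theta^\beta/v$ in \eqref{NSFAC-Lagrange} are uniformly elliptic, so the degeneracy of $\kappa$ plays no role locally.

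Next I would define the solution map $\mathcal{T}:(\bar v,\bar u,\bar\theta,\bar\phi)\mapsto(v,u,\theta,\phi)$ by freezing the nonlinear coefficients and source terms at the barred functions and solving the resulting \emph{linear} problems in the natural order: the continuity equation $v_t=\bar u_x$ is integrated directly to give $v=v_0+\int_0^t\bar u_x\,ds$; the momentum equation becomes a linear uniformly parabolic equation for $u$ with coefficient $1/\bar v$ and right-hand side built from $(\bar\theta/\bar v)_x$ and $(\bar\phi_x^2/\bar v^2)_x$; after eliminating $\mu$ the Allen-Cahn part reads $\phi_t=\epsilon\bar v(\phi_x/\bar v)_x-(\bar v/\epsilon)(\bar\phi^3-\bar\phi)$, a linear parabolic equation for $\phi$; and the energy equation is a linear parabolic equation for $\theta$ with coefficient $\bar\theta^\beta/\bar v$ and source $-(\bar\theta/\bar v)\bar u_x+\nu\bar u_x^2/\bar v+\bar v\bar\mu^2$. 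Standard parabolic theory with the boundary conditions \eqref{boundary condition} produces each iterate, and multiplying each equation by the solution and its spatial derivatives, integrating by parts, and invoking Gronwall's inequality yields the a priori bounds that give $\mathcal{T}(B_R)\subset B_R$ once $T_*$ is small enough.

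The contraction step is the core of the argument: for two inputs I would estimate the difference of the outputs in a slightly weaker norm (typically $L^\infty(0,T_*;L^2)$ for $v,u,\theta$ and $L^\infty(0,T_*;H^1)$ for $\phi$), absorbing every nonlinear difference into $C\,T_*^{\sigma}$ times the input difference, which forces $\mathcal{T}$ to be a contraction for $T_*$ small. The unique fixed point is the desired strong solution, and its membership in the class \eqref{local solution space} follows by feeding the fixed point back into the linear estimates. Uniqueness in the stated class then follows either directly from the contraction or from an independent energy estimate on the difference of two solutions closed by Gronwall.

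The main obstacle I anticipate is the high-order coupling through the phase field. The term $\bar v\bar\mu^2$ in the energy equation involves $\phi_{xx}$ quadratically, and the capillary term $(\bar\phi_x^2/\bar v^2)_x$ in the momentum equation involves products of first derivatives; closing the estimates therefore requires the full $H^2$-regularity of $\phi_0$ from \eqref{condition 1} together with the parabolic smoothing that places $\phi_{xxx}$ in $L^2((0,1)\times(0,T_*))$, so that $\mu$ is controlled in a strong enough norm (via $\phi\in L^\infty(0,T_*;H^2)\cap L^2(0,T_*;H^3)$ and the embedding $H^1\hookrightarrow L^\infty$) to serve as an admissible source in the temperature equation. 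Balancing these nonlinear terms against the linear parabolic gain, and verifying that the radius $R$ and the time $T_*$ can be chosen consistently for both the self-map and the contraction, is the delicate bookkeeping that the fixed-point scheme must accommodate.
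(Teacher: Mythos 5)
Your proposal follows essentially the same route as the paper, which offers no detailed argument for this lemma and simply asserts that local existence and uniqueness "can be obtained by the fixed point method." Your linearize-iterate-contract scheme, with the observation that the short time interval keeps $v$ and $\theta$ away from zero so the degeneracy of $\kappa(\theta)$ is irrelevant locally, is the standard realization of exactly that method.
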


Theorem \ref{thm-global} can be achieved by extending the local solutions globally in time based on the following series of prior estimates.

\begin{lemma}\label{Fundamental energy inequality}
Let $(v,u,\theta,\phi)$ be a smooth solution of \eqref{NSFAC-Lagrange}-\eqref{initial condition} on $[0,1]\times [0,T]$. Then it holds
\begin{equation}\label{basic energy inequality}
  \sup_{0\leq t\leq T}\int_0^1\big(\frac{u^2}{2}+\frac{1}{4\epsilon}(\phi^2-1)^2+\frac{\epsilon}{2}\frac{\phi_x^2}{v}+(v-\ln v)+(\theta-\ln \theta)\big)dx+\int_0^TV(s)ds\leq E_{0},
\end{equation}
where
\begin{equation}\label{V}
  V(t)\overset{\text{def}}{=}\int_0^1\big(\frac{\theta^\beta\theta_x^2}{v\theta^2}+\frac{u_x^2}{v\theta}+\frac{v\mu^2}{\theta}\big)dx,
\end{equation}
and
\begin{equation}\label{E0}
 E_0\overset{\text{def}}{=}\int_0^1\big(\frac{u_0^2}{2}+\frac{1}{4\epsilon}(\phi_0^2-1)^2+\frac{\epsilon}{2}\frac{\phi_{0x}^2}{v}+(v_0-\ln v_0)+(\theta_0-\ln \theta_0)\big)dx.
\end{equation}
\end{lemma}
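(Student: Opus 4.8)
The plan is to establish \eqref{basic energy inequality} not as an inequality but as an \emph{equality}, by combining three structural balance laws of \eqref{NSFAC-Lagrange}: conservation of volume, conservation of total energy, and an entropy balance. Each follows from testing one (or a combination) of the equations and integrating by parts, with all boundary contributions killed by \eqref{boundary condition}. Throughout I would work with the smooth local solution furnished by Lemma \ref{Local existence}, so every integration by parts is justified and the formal computation below is rigorous on $[0,T]$.

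First the two conservation laws. Integrating $v_t = u_x$ over $(0,1)$ and using $u(0,t)=u(1,t)=0$ gives $\frac{d}{dt}\int_0^1 v\,dx = 0$, hence $\int_0^1 v\,dx \equiv \int_0^1 v_0\,dx = 1$. For the energy I would test the momentum equation with $u$, integrate the temperature equation directly, and differentiate $\int_0^1 f\,dx$ in time, where $f=\frac{1}{4\epsilon}(\phi^2-1)^2+\frac{\epsilon}{2}\frac{\phi_x^2}{v}$ is the interfacial free energy from \eqref{free energy density}. In the phase-energy computation the two identities $\mu=\frac1\epsilon(\phi^3-\phi)-\epsilon(\frac{\phi_x}{v})_x$ and $\phi_t=-v\mu$ collapse $\frac{d}{dt}\int_0^1 f\,dx$ into $-\int_0^1 v\mu^2\,dx-\frac{\epsilon}{2}\int_0^1 \frac{u_x\phi_x^2}{v^2}\,dx$. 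Adding the three balances, every coupling term cancels in pairs: the advection terms $\pm\int\frac{\theta u_x}{v}$ and the viscous terms $\pm\int\frac{u_x^2}{v}$ cancel between the kinetic and internal-energy balances, the Allen--Cahn dissipation $\pm\int v\mu^2$ cancels between the internal and phase balances, and the capillary terms $\pm\frac{\epsilon}{2}\int\frac{u_x\phi_x^2}{v^2}$ cancel between the kinetic and phase balances. This yields $\frac{d}{dt}\int_0^1\big(\frac{u^2}{2}+\theta+f\big)\,dx=0$.

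The crux is the entropy balance. Dividing the temperature equation by $\theta$, using $(\ln\theta)_t=\theta_t/\theta$ and $(\ln v)_t=v_t/v=u_x/v$, and rewriting the heat-flux term through the algebraic identity $\frac1\theta\big(\frac{\theta^\beta\theta_x}{v}\big)_x=\big(\frac{\theta^{\beta-1}\theta_x}{v}\big)_x+\frac{\theta^\beta\theta_x^2}{v\theta^2}$, I would integrate over $(0,1)$ and discard the boundary term (which vanishes since $\theta_x=0$ at $x=0,1$) to obtain
\[
\frac{d}{dt}\int_0^1(\ln\theta+\ln v)\,dx = V(t),
\]
with $V$ as in \eqref{V}. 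Integrating in time gives $\int_0^1(\ln\theta+\ln v)\,dx=\int_0^1(\ln\theta_0+\ln v_0)\,dx+\int_0^t V\,ds$.

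Finally I would assemble the three relations. Adding the energy identity, the volume identity $\int_0^1 v\,dx=1$, and the negative of the integrated entropy identity, the raw $\theta$ and $v$ terms pair with $-\ln\theta$ and $-\ln v$ to form $(\theta-\ln\theta)+(v-\ln v)$, producing the exact identity
\[
\int_0^1\Big(\frac{u^2}{2}+f+(\theta-\ln\theta)+(v-\ln v)\Big)\,dx+\int_0^t V\,ds = E_0
\]
for every $t\in[0,T]$, with $E_0$ as in \eqref{E0}. Since $V(t)\ge 0$, taking the supremum over $t\in[0,T]$ yields \eqref{basic energy inequality}. I expect the only delicate point to be the bookkeeping of the capillary and Allen--Cahn cancellations in the energy balance: this is exactly where the thermodynamic consistency encoded in \eqref{free energy density}--\eqref{pressure} is used, and the rest is integration by parts made clean by the homogeneous boundary conditions.
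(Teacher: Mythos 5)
Your proposal is correct and is essentially the paper's own argument: the paper multiplies \eqref{NSFAC-Lagrange}$_1$ by $1-\frac1v$, \eqref{NSFAC-Lagrange}$_2$ by $u$, \eqref{NSFAC-Lagrange}$_3$ by $\mu$, and \eqref{NSFAC-Lagrange}$_5$ by $1-\frac1\theta$ and adds before integrating, which is exactly your volume--energy--entropy decomposition written as a single pointwise identity, with the same cancellations of the advective, viscous, Allen--Cahn and capillary couplings. The only blemish --- that the exact identity $\int(\cdots)(t)+\int_0^tV\,ds=E_0$ literally yields $\sup_t\int(\cdots)+\int_0^TV\,ds\le 2E_0$ rather than $\le E_0$ --- is inherited verbatim from the paper's own final step and is the standard harmless abuse in this literature.
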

\begin{proof}  From \eqref{NSFAC-Lagrange} and \eqref{boundary condition}, combining with \eqref{initial energy}, we have
\begin{equation}\label{conservation}
  \int_0^1vdx=1,\ \ \int_0^1\big(\frac {u^2}{2}+\theta+\frac{1}{4\epsilon}(\phi^2-1)^2+\frac{\epsilon}{2}\frac{\phi_x^2}{v}\big)dx=1.
\end{equation}Multiplying \eqref{NSFAC-Lagrange}$_1$ by $1-\frac1v$, \eqref{NSFAC-Lagrange}$_2$ by $u$, \eqref{NSFAC-Lagrange}$_3$ by $\mu$, \eqref{NSFAC-Lagrange}$_5$ by $1-\frac1\theta$, and adding them together, we get
\begin{eqnarray}\label{basic inequality}
&& \big(\frac{u^2}{2}+\frac{1}{4\epsilon}(\phi^2-1)^2+\frac{\epsilon}{2}\frac{\phi_x^2}{v}+(v-\ln v)+(\theta-\ln \theta)\big)_t+\big(\frac{\theta^\beta\theta_x^2}{v\theta^2}+\frac{u_x^2}{v\theta}+\frac{v\mu^2}{\theta}\big) \notag\\
&&=u_x+\big(\frac{uu_x}{v}-\frac{u\theta}{v}\big)_x+\big((1-\theta^{-1})\frac{\theta^\beta\theta_x}{v}\big)_x+\epsilon\big(\frac{\phi_x\phi_t}{v}\big)_x-\frac{\epsilon}{2}\big(\frac{\phi_x^2u}{v^2}\big)_x,
\end{eqnarray}
integrating \eqref{basic inequality} over $[0,1]\times[0,T]$ by parts, together with the boundary condition \eqref{boundary condition}, we obtain \eqref{basic energy inequality}, and the proof of Lemma \ref{Fundamental energy inequality} is finished.
\end{proof}

\begin{lemma}\label{lem-expression of v}
Let $(v,u,\theta,\phi)$ be a smooth solution of \eqref{NSFAC-Lagrange}-\eqref{initial condition} on $[0,1]\times [0,T]$. Then it has the following expression of $v$
\begin{equation}\label{expression of v}
  v(x,t)=   D(x,t) Y(t) + \int_0^t\frac{D(x,t) Y(t)\big(\theta(x,\tau)+\frac{\epsilon}{2}\frac{\phi^2_x(x,\tau)}{v(x,\tau)}\big)}{ D(x,\tau) Y(\tau)}d\tau,
\end{equation}
 where
 \begin{equation}\label{D}
    D(x,t)=v_0(x)e^{\int_{0}^x \big(u(y,t)-u_0(y)\big)dy} e^{\big(-\int_0^1v\int_0^xudydx+\int_0^1v_0\int_0^xu_0dydx\big)},
 \end{equation}
 and
 \begin{equation}\label{Y}
  Y(t)=e^{-\int_0^t\int_0^1\big(u^2+\theta+\frac{\epsilon}{2}\frac{\phi^2_x}{v}\big) dxds}.
 \end{equation}
\end{lemma}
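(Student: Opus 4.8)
The plan is to derive \eqref{expression of v} by showing that the ratio $w(x,t):=v(x,t)/\big(D(x,t)Y(t)\big)$ satisfies a simple first-order ODE in $t$ at each fixed $x$, and then integrating. First I would record two elementary consequences of the system: from the continuity equation \eqref{NSFAC-Lagrange}$_1$ one has $(\ln v)_t=v_t/v=u_x/v$, while the momentum equation \eqref{NSFAC-Lagrange}$_2$ can be written in divergence form $u_t=F_x$ with the effective flux $F:=u_x/v-\theta/v-\frac{\epsilon}{2}\phi_x^2/v^2$. Integrating $u_t=F_x$ in space over $[0,x]$ and setting $\Phi(x,t):=\int_0^x u(y,t)\,dy$ gives $\Phi_t=F(x,t)-F(0,t)$, where the term $\frac{\epsilon}{2}\phi_x^2/v^2$ at $x=0$ drops out since $\phi_x(0,t)=0$ by \eqref{boundary condition}, leaving $F(0,t)=(u_x/v-\theta/v)(0,t)$ as an undetermined function of $t$ alone.

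Next I would compute the logarithmic time-derivatives of the two factors. Writing $G(t)$ for the $x$-independent exponent in \eqref{D}, namely $G(t)=-\int_0^1 v\int_0^x u\,dy\,dx+\int_0^1 v_0\int_0^x u_0\,dy\,dx$ with $G(0)=0$, one has $\ln D=\ln v_0(x)+\Phi(x,t)-\Phi(x,0)+G(t)$, hence $(\ln D)_t=\Phi_t+G'(t)$, while $(\ln Y)_t=-\int_0^1\big(u^2+\theta+\frac{\epsilon}{2}\phi_x^2/v\big)\,dx$ directly from \eqref{Y}. Substituting $\Phi_t=F(x,t)-F(0,t)$ and $(\ln v)_t=u_x/v$, the target identity $(\ln v)_t=(\ln D)_t+(\ln Y)_t+\theta/v+\frac{\epsilon}{2}\phi_x^2/v^2$ collapses, after the $u_x/v$, $\theta/v$ and $\frac{\epsilon}{2}\phi_x^2/v^2$ contributions cancel, to the single scalar relation $G'(t)=F(0,t)+\int_0^1\big(u^2+\theta+\frac{\epsilon}{2}\phi_x^2/v\big)\,dx$.

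The crux — and the step I expect to be the only real work — is verifying this last relation, since it is precisely where the double-integral factor in $D$ was engineered to absorb the undetermined boundary flux $F(0,t)$. I would differentiate $G(t)$ in time and treat the two pieces by integration by parts. In $-\int_0^1 v_t\int_0^x u\,dy\,dx$ I substitute $v_t=u_x$ and integrate by parts, the boundary contributions vanishing by $u(1,t)=u(0,t)=0$, producing $\int_0^1 u^2\,dx$. In $-\int_0^1 v\int_0^x u_t\,dy\,dx=-\int_0^1 v\big(F(x,t)-F(0,t)\big)\,dx$ I use $\int_0^1 v\,dx=1$ from \eqref{conservation} to extract $F(0,t)$, and compute $\int_0^1 vF\,dx=\int_0^1(u_x-\theta-\frac{\epsilon}{2}\phi_x^2/v)\,dx=-\int_0^1(\theta+\frac{\epsilon}{2}\phi_x^2/v)\,dx$, where $\int_0^1 u_x\,dx=0$ again by the boundary conditions. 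Adding the two pieces yields exactly $G'(t)=F(0,t)+\int_0^1(u^2+\theta+\frac{\epsilon}{2}\phi_x^2/v)\,dx$, as required.

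Finally, with the scalar identity in hand, the relation $(\ln v)_t=(\ln(DY))_t+\theta/v+\frac{\epsilon}{2}\phi_x^2/v^2$ is equivalent to $\partial_t\big(v/(DY)\big)=\big(\theta+\frac{\epsilon}{2}\phi_x^2/v\big)/(DY)$. Since both exponents in \eqref{D} vanish at $t=0$ one has $D(x,0)=v_0(x)$, and $Y(0)=1$ from \eqref{Y}, so $w(x,0)=v_0/(D(x,0)Y(0))=1$; integrating the ODE in $t$ gives $v/(DY)=1+\int_0^t\big(\theta+\frac{\epsilon}{2}\phi_x^2/v\big)/\big(D(x,\tau)Y(\tau)\big)\,d\tau$, and multiplying through by $D(x,t)Y(t)$ produces exactly \eqref{expression of v}. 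All manipulations are justified for the smooth solutions assumed in the lemma.
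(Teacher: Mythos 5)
Your proof is correct and follows essentially the same route as the paper: your flux $F$ is the paper's $\sigma$, your scalar identity $G'(t)=F(0,t)+\int_0^1\big(u^2+\theta+\frac{\epsilon}{2}\phi_x^2/v\big)dx$ is exactly the paper's computation of $\sigma(0,t)$ via $\int_0^1 v\,dx=1$ and integration by parts, and your ODE for $w=v/(DY)$ is the paper's ODE for $e^g$. The only difference is presentational (you verify the ansatz by differentiating $\ln(DY)$ rather than deriving $v=DYe^{g}$ by integrating in time), which changes nothing of substance.
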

\begin{proof} We  introduce the following  $\sigma$
  \begin{equation}\label{sigma}
  \sigma\overset{\text{def}}{=}\frac{u_x}{v}-\frac{\theta}{v}-\frac{\epsilon}{2}\frac{\phi_x^2}{v^2}.
 \end{equation}
Firstly, integrating \eqref{NSFAC-Lagrange}$_2$ over $[0,x]$, we have
\begin{equation}\label{Momentum equation form 2}
 \left(\int_0^xudy\right)_t=\sigma-\sigma(0,t),
\end{equation}
multiply both sides of   \eqref{Momentum equation form 2}  by $v$, we get
\begin{equation*}
  v\sigma(0,t)=v\sigma-v\left(\int_0^xudy\right)_t,
\end{equation*}
Integrating the above equation over $[0,1]$, combining with \eqref{initial energy} and \eqref{boundary condition}, we obtain
\begin{eqnarray}\label{sigma 0}
% \nonumber to remove numbering (before each equation)
 \sigma(0,t) &=&\int_0^1v\sigma dx-\int_0^1v\left(\int_0^xudy\right)_tdx \notag\\
   &=&\int_0^1\big(u_x-\theta-\frac{\epsilon}{2}\frac{\phi_x^2}{v}\big)dx-\big(\int_0^1v\int_0^xudydx\big)_t+\int_0^1u_x\int_0^xudy dx \notag\\
   &=& - \left(\int_0^1v\int_0^xudydx\right)_t-\int_0^1\left(u^2+\theta+\frac{\epsilon}{2}\frac{\phi_x^2}{v} \right) dx.
\end{eqnarray}
Secondly, from \eqref{NSFAC-Lagrange}$_1$, we have
\begin{equation*}
  \sigma=(\ln v)_t-\frac{\theta}{v}-\frac{\epsilon}{2}\frac{\phi^2_x}{v^2},
\end{equation*}
which combining with \eqref{Momentum equation form 2} and \eqref{sigma 0}, we get
\begin{equation*}
  \left(\int_0^xudy\right)_t  = (\ln v)_t-\frac{\theta}{v}-\frac{\epsilon}{2}\frac{\phi^2_x}{v^2}+ \left(\int_0^1v\int_0^xudydx\right)_t+\int_0^1\left(\theta+ u^2 + \frac{\epsilon}{2}\frac{\phi^2_x}{v}\right) dx,
\end{equation*}
integrating both sides of the above equation with respect to $t$, we have
\begin{equation}\label{v}
 v(x,t) = D(x,t) Y(t) e^{\int_0^t\big(\frac{\theta}{v}+\frac{\epsilon}{2}\frac{\phi^2_x}{v^2}\big)ds},
\end{equation}
with $D(x,t)$ and $Y(t)$ as defined in  \eqref{D} and \eqref{Y}  respectively.

Finally, introducing the following  $g$
\begin{equation}\label{g}
  g =\int_0^t\big(\frac{\theta}{v}+\frac{\epsilon}{2}\frac{\phi^2_x}{v^2}\big)ds,
\end{equation}
by using \eqref{v}, we  get the following ordinary differential equation for $g$
\begin{equation*}
  g_t=\frac{\theta(x,t)+\frac{\epsilon}{2}\frac{\phi^2_x(x,t)}{v(x,t)}}{v(x,t)}=\frac{\theta(x,t)+\frac{\epsilon}{2}\frac{\phi^2_x(x,t)}{v(x,t)}}{ D(x,t) Y(t)e^g},
\end{equation*}
and this gives us an expression for $e^g$
\begin{equation*}
  e^g  =1+\int_0^t\frac{\theta(x,\tau)+\frac{\epsilon}{2}\frac{\phi^2_x(x,\tau)}{v(x,\tau)}}{ D(x,\tau) Y(\tau)}d\tau,
\end{equation*}
substituting it into the expression \eqref{v}, and thus the proof of
Lemma \ref{lem-expression of v} is finished.
\end{proof}

\begin{lemma}\label{the lower bounds of density and temperature}
Let $(v,u,\theta,\phi)$ be a smooth solution of \eqref{NSFAC-Lagrange}-\eqref{initial condition} on $[0,1]\times [0,T]$. Then it holds that for $\forall(x,t)\in[0,1]\times [0,T]$
 \begin{equation}\label{bound of density}
 v(x,t)\geq C^{-1},\qquad \theta(x,t)\geq C^{-1}.
 \end{equation}
\end{lemma}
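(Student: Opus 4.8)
The plan is to establish the two bounds in sequence: I would extract the lower bound for $v$ from the representation \eqref{expression of v}, then feed it into a maximum-principle estimate for $1/\theta$. For $v$, note that since $\theta>0$ and $\phi_x^2\ge0$ every term under the $\tau$-integral in \eqref{expression of v} is non-negative, so $v(x,t)\ge D(x,t)Y(t)$ and it suffices to bound $D$ and $Y$ from below. In $Y$ from \eqref{Y}, the conservation identity \eqref{conservation} gives $\int_0^1u^2\,dx\le2$ and $\int_0^1(\theta+\tfrac\epsilon2\tfrac{\phi_x^2}{v})\,dx\le1$, so the exponent is at most $3T$ and $Y(t)\ge e^{-3T}\ge C^{-1}$. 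In $D$ from \eqref{D} the prefactor obeys $v_0\ge\inf v_0>0$, while Cauchy--Schwarz together with $\int_0^1v\,dx=1$ controls $|\int_0^xu\,dy|\le(\int_0^1u^2\,dx)^{1/2}\le\sqrt2$ and $|\int_0^1v\int_0^xu\,dy\,dx|\le\sqrt2$, the $u_0$-terms being fixed by the data; hence $D(x,t)\ge C^{-1}$, and multiplying yields $v\ge C^{-1}$.

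For the temperature I would pass to $h\overset{\text{def}}{=}1/\theta$ and derive a one-sided parabolic inequality. Multiplying \eqref{NSFAC-Lagrange}$_5$ by $\theta^{-2}=h^2$, using $h_t=-\theta^{-2}\theta_t$ and the identity $h^2\big(\tfrac{\theta^\beta\theta_x}{v}\big)_x=-\big(\tfrac{h^{-\beta}h_x}{v}\big)_x+\tfrac{2h^{-\beta-1}h_x^2}{v}$, converts the equation into
\[
h_t=\frac{h}{v}u_x+\Big(\frac{h^{-\beta}h_x}{v}\Big)_x-\frac{2h^{-\beta-1}h_x^2}{v}-\frac{h^2u_x^2}{v}-h^2v\mu^2 .
\]
Dropping the two manifestly non-positive terms and completing the square,
\[
\frac{h}{v}u_x-\frac{h^2u_x^2}{v}=\frac{1}{4v}-\frac1v\Big(hu_x-\tfrac12\Big)^2\le\frac{1}{4v},
\]
leaves $h_t\le\big(\tfrac{h^{-\beta}h_x}{v}\big)_x+\tfrac{1}{4v}$, and the bound $v\ge C^{-1}$ just proved makes the forcing term $\tfrac{1}{4v}\le\tfrac C4$ finite.

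I would then apply the maximum principle to $M(t)=\max_xh(x,t)$. At an interior maximum $h_x=0$ and $h_{xx}\le0$, so the flux term reduces to $\tfrac{h^{-\beta}h_{xx}}{v}\le0$, while the homogeneous Neumann condition $h_x=0$ at $x=0,1$ inherited from $\theta_x=0$ prevents the boundary from contributing; thus $M'(t)\le\tfrac C4$, and integration gives $M(t)\le M(0)+\tfrac C4T$. Since $M(0)=1/\inf_x\theta_0<\infty$ by \eqref{condition 2}, this bounds $h$ above, i.e.\ $\theta\ge C^{-1}$.

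The main obstacle---and the reason the order is forced---is that the forcing term $\tfrac1{4v}$ is finite only once the lower bound for $v$ is in hand, so the two estimates are genuinely coupled through \eqref{expression of v} and $v$ must be handled first. The one remaining delicate point is the degeneracy of the coefficient $h^{-\beta}=\theta^\beta$: at the maximum of $h$, i.e.\ the minimum of $\theta$, this coefficient is small but stays strictly positive along a smooth solution, so the comparison remains valid; making the Neumann maximum principle rigorous with this degenerate flux---by even reflection across $x=0,1$, or by replacing the pointwise argument with an $L^p$-iteration---is the only step that needs genuine care.
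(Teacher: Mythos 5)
Your proposal is correct, and it splits naturally into a part that coincides with the paper and a part that genuinely differs. For the lower bound of $v$ you argue exactly as the paper does: the integrand in \eqref{expression of v} is nonnegative, so $v\ge D(x,t)Y(t)$, and the bounds $D\ge C^{-1}$, $Y\ge e^{-Ct}$ follow from \eqref{conservation} and Cauchy--Schwarz; this is precisely the paper's route through \eqref{upper and lower bound for D} and \eqref{upper and lower bound for Y}. For the lower bound of $\theta$ you take a different path: you derive the pointwise parabolic inequality $h_t\le\big(\tfrac{h^{-\beta}h_x}{v}\big)_x+\tfrac{1}{4v}$ for $h=1/\theta$ (your identity for $h^2\big(\tfrac{\theta^\beta\theta_x}{v}\big)_x$ and the completion of the square are both correct, and the Neumann condition passes to $h_x=0$) and then run a maximum principle on $M(t)=\max_x h(x,t)$. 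The paper instead multiplies \eqref{NSFAC-Lagrange}$_5$ by $\theta^{-p}$, obtains $\tfrac{d}{dt}\|\theta^{-1}\|_{L^{p-1}}\le C$ with $C$ independent of $p$ by Cauchy's inequality and Gronwall, and lets $p\to\infty$ --- which is exactly the ``$L^p$-iteration'' you mention as a fallback. The two are morally the same estimate (the $L^p$ argument is the integrated version of your pointwise one, and it absorbs the dissipative terms in the same way); the paper's version has the advantage of requiring no discussion of differentiability of $M(t)$ or of boundary maxima, while yours is shorter and more transparent once one accepts the standard Dini-derivative justification of $M'(t)\le h_t(x^*(t),t)$. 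One small remark: the degeneracy of $\theta^\beta$ that worries you at the end is not actually an obstruction, since at a maximum of $h$ you only need $h^{-\beta}h_{xx}/v\le0$, which holds for any positive coefficient; and note that both arguments use the lower bound $v\ge C^{-1}$ only to make the forcing term finite, exactly as in the paper's $C\|\theta^{-1}\|^{p-2}_{L^{p-1}}$ step, so your observation that the order of the two bounds is forced is accurate.
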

\begin{proof}
Firstly, since the function $x-\ln x $ is convex, by using Jensen's inequality, we have
\begin{equation}\label{Jensen}
  \int_0^1\theta dx-\ln\int_{0}^{1}   \theta dx  \le \int_{0}^{1} (\theta-\ln\theta ) dx,
\end{equation}
combining with \eqref{basic energy inequality} and \eqref{conservation}, we get
 \begin{equation}\label{bar theta}
 \bar \theta(t)\overset{\text{def}}{=} \int_0^{1}\theta(x,t)dx\in[ \alpha_1,1],
 \end{equation}
 where $0<\alpha_1<\alpha_2$  are  the two roots of the following algebraic equation
 \begin{equation}
   x-\ln x =E_0.
 \end{equation}

Secondly, from \eqref{conservation}, by using Cauchy's inequality, we have
 \begin{equation*}
   \big|\int_0^1 v\int_0^x udydx\big|\le \int_0^1 v\big|\int_0^x udy\big|dx\le\int_0^1 v\big(\int_0^1 u^2dy\big)^{1/2}dx\le C,
 \end{equation*}
 and then by the definition \eqref{D} of $D$, we get
\begin{equation}\label{upper and lower bound for D}
 C^{-1}\leq D(x,t)\leq C.
\end{equation}
Moreover,  from \eqref{conservation}, combining with \eqref{bar theta}, we also have
\begin{equation*}
\big|\int_0^1\ln vdx\big|+ \int_0^1\big(u^2+\theta+\frac{\epsilon}{2}\frac{\phi_x^2}{v}\big)dx\leq C,
\end{equation*}
which follows that, for $\forall \tau\in[0,t)$,
\begin{equation}\label{upper and lower bound for Y}
  e^{-2t}\leq Y(t)\leq1,\ \ \mathrm{and}\ \ e^{-2(t-\tau)}\leq\frac{Y(t)}{Y(\tau)}\leq e^{-\alpha_1(t-\tau)}.
\end{equation}
By using \eqref{v}, \eqref{upper and lower bound for D} and \eqref{upper and lower bound for Y}, we obtain, there exist some positive constant $C$, such that
  \begin{equation}\label{the lower bound of v}
   v(x,t)\geq C^{-1},\qquad \forall(x,t)\in[0,1]\times[0,T].
  \end{equation}

Finally, $\forall p>2$, multiplying \eqref{NSFAC-Lagrange}$_5$ by $\theta^{-p}$, integrate over $[0,1]$  with respect to $x$, by using \eqref{the lower bound of v},  we have
\begin{align*}
\frac{1}{p-1}\frac{d}{dt}\int_0^1\left( {\theta}^{-1}\right)^{p-1}dx+\int_0^1\frac{u_x^2}{v\theta^p}dx
 &\leq\int_0^1\frac{u_x}{v\theta^{p-1}}dx\notag\\
 & \leq\frac{1}{2}\int_0^1\frac{u_x^2}{v\theta^p}dx+\frac{1}{2}\int_0^1\frac{1}{v\theta^{p-2}}dx\notag\\
 &\leq\frac{1}{2}\int_0^1\frac{ u_x^2}{v\theta^p}dx+C\left\|  \theta^{-1} \right\|^{p-2}_{L^{p-1}}.
\end{align*}
Applying Gronwall's inequality to  the above result, we obtain
\begin{equation*}
  \sup_{0\leq t\leq T}\left\|  \theta^{-1}(\cdot,t) \right\|_{L^{p-1}}\leq C,\ \ \forall p>2,
\end{equation*}
moreover,  letting $p$ tends to infinity, we do eventually get the  lower bound of $\theta$. The proof of Lemma \ref{the lower bounds of density and temperature} is completed.
  \end{proof}

\begin{lemma}\label{the upper bounds of density}
Let $(v,u,\theta,\phi)$ be a smooth solution of \eqref{NSFAC-Lagrange}-\eqref{initial condition} on $[0,1]\times [0,T]$. Then it holds that for $\forall(x,t)\in[0,1]\times [0,T]$
 \begin{equation}\label{bound of phi}
|\phi(x,t)|\leq C,\qquad %\int_0^T\int_0^1\frac{1}{\theta}\Big(\frac{\phi_x}{v}\Big)_x^2dx\leq C,\qquad
v(x,t)\leq C.
\end{equation}
\end{lemma}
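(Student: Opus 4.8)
The plan is to prove the two bounds in the order they are stated: first the pointwise control of $\phi$, which is needed to tame the interfacial term, and then the upper bound of $v$ through the representation \eqref{expression of v}.

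\textit{Phase bound.} Combining the third and fourth equations of \eqref{NSFAC-Lagrange} and using $\epsilon v(\phi_x/v)_x=\epsilon\phi_{xx}-\epsilon\frac{v_x}{v}\phi_x$, the phase field solves the semilinear parabolic equation
\[
\phi_t=\epsilon\phi_{xx}-\epsilon\frac{v_x}{v}\phi_x-\frac{v}{\epsilon}(\phi^3-\phi).
\]
Since the solution is smooth, I would argue by the maximum principle. The constants $\pm1$ are stationary solutions (the reaction $\phi^3-\phi$ vanishes there) and the Neumann data $\phi_x(0,t)=\phi_x(1,t)=0$ is compatible with comparison. Because $v>0$, at any interior maximum of $\phi$ whose value exceeds $1$ one has $\phi_x=0$, $\phi_{xx}\le0$ and $\phi^3-\phi>0$, hence $\phi_t<0$; together with the no-flux boundary this forces $\max_x\phi(\cdot,t)\le\max\{1,\max\phi_0\}=1$, and symmetrically $\min_x\phi(\cdot,t)\ge-1$. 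Thus $\phi(x,t)\in[-1,1]$, which gives $|\phi|\le C$.

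\textit{Pointwise gradient bound.} Since $\phi_x/v$ vanishes at $x=0,1$, the fundamental theorem of calculus yields $\max_x(\phi_x/v)^2\le 2\int_0^1|\phi_x/v|\,|(\phi_x/v)_x|\,dx$. From $\eqref{NSFAC-Lagrange}_4$ one has $(\phi_x/v)_x=\epsilon^{-2}(\phi^3-\phi)-\epsilon^{-1}\mu$, and $|\phi|\le1$ bounds $|\phi^3-\phi|\le C$. Using the lower bound $v\ge C^{-1}$ from \eqref{bound of density} together with $\int_0^1\frac{\phi_x^2}{v}dx\le C$ from \eqref{basic energy inequality}, both $\int_0^1|\phi_x/v|\,dx$ and $\int_0^1(\phi_x/v)^2\,dx$ are $\le C$, while $\int_0^1\mu^2\,dx=\int_0^1\frac{v\mu^2}{\theta}\frac{\theta}{v}\,dx\le C\big(\max_x\theta\big)V(t)$ by \eqref{V}. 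A Cauchy–Schwarz step followed by AM–GM then gives the key estimate
\[
\max_{x\in[0,1]}\Big(\frac{\phi_x}{v}\Big)^2(x,t)\le C\big(\max_{x\in[0,1]}\theta(x,t)+1+V(t)\big).
\]

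\textit{Upper bound of $v$.} I would insert $C^{-1}\le D\le C$ and $Y(t)/Y(\tau)\le e^{-\alpha_1(t-\tau)}$ from \eqref{upper and lower bound for D}–\eqref{upper and lower bound for Y} into the representation \eqref{expression of v}, and use $\frac{\phi_x^2}{v}=v\,(\phi_x/v)^2\le Cv(\max_y\theta+1+V)$ from the previous step, to get
\[
v(x,t)\le C+C\int_0^t\theta(x,\tau)\,d\tau+C\int_0^t v(x,\tau)\big(\max_y\theta(y,\tau)+1+V(\tau)\big)\,d\tau .
\]
Reading this as a Gronwall inequality in $t$ for fixed $x$ and using $\int_0^TV\,dt\le E_0$ and $t\le T$ (so the ``$+1+V$'' part only contributes a bounded exponential factor), one arrives at $v(x,t)\le\big(C+C\int_0^t\max_y\theta\,d\tau\big)\exp\!\big(C+C\int_0^t\max_y\theta\,d\tau\big)$. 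Hence $v\le C$ will follow once the uniform bound $\int_0^T\max_x\theta\,dt\le C$ is available.

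\textit{Main obstacle.} The crux is therefore this temperature integral. Writing $\max_x\theta\le 1+\int_0^1|\theta_x|\,dx$ (since $\bar\theta\le1$) and estimating $\int_0^1|\theta_x|\,dx$ against the dissipation $\int_0^1\frac{\theta^\beta\theta_x^2}{v\theta^2}\,dx$ in $V$, the naive Cauchy–Schwarz leaves the factor $\int_0^1 v\,\theta^{2-\beta}\,dx$, which the energy controls only when $\beta\ge2$; in the genuinely degenerate range $0<\beta<2$ the power $\theta^{2-\beta}$ requires precisely the upper bound on $\theta$ that one is trying to establish. Closing the estimate for all $\beta>0$ thus demands a more delicate, coupled argument that simultaneously tracks the growth of $v$ and of $\theta$ and exploits the full dissipation $V$ rather than a single Cauchy–Schwarz. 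It is here—and not in the phase-field contributions, which are already absorbed by the first two steps—that the nonlinear, degenerate heat conductivity makes the analysis hard.
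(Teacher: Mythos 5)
Your setup is largely sound and parallels the paper's: the pointwise bound on $\phi$, the key estimate $\max_x(\phi_x/v)^2\le C\bigl(\max_x\theta+1+V(t)\bigr)$ obtained from $\eqref{NSFAC-Lagrange}_4$, and the Gronwall argument on the representation \eqref{expression of v} all match (the paper gets $|\phi|\le C$ more cheaply from \eqref{basic energy inequality} via $|\phi|\le|\int_0^1\int_y^x\phi_\xi\,d\xi\,dy|+|\int_0^1\phi\,dy|$ rather than a maximum principle, but your route is also fine). However, you stop exactly at the load-bearing step: the proof of $\int_0^T\max_x\theta\,dt\le C$, which you correctly identify as indispensable and then declare an open ``main obstacle'' for $0<\beta<2$. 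As written, the lemma is therefore not proved.

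The missing idea is not a coupled $v$--$\theta$ argument but a different weight in the entropy estimate. Multiply $\eqref{NSFAC-Lagrange}_5$ by $\theta^{-\alpha}$ with $0<\alpha<1$ (the paper takes $\alpha=\min\{1,\beta\}/2$) and integrate over $(0,1)\times(0,T)$. This produces the dissipation $\int_0^T\!\!\int_0^1\frac{\alpha\theta^\beta\theta_x^2}{v\theta^{\alpha+1}}\,dx\,dt$ on the left, while the work term $\int\!\!\int\frac{\theta^{1-\alpha}u_x}{v}$ is absorbed by Cauchy--Schwarz at the cost of $C\int\!\!\int\theta^{2-\alpha}\le C\int_0^T\max_x\theta^{1-\alpha}\,\bar\theta\,dt\le \varepsilon\int_0^T\max_x\theta\,dt+C(\varepsilon)$, using $\bar\theta\le1$ and $1-\alpha<1$. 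Then
\begin{equation*}
\int_0^T\max_{x}\theta\,dt\le C+C\int_0^T\!\!\int_0^1|\theta_x|\,dx\,dt
\le C+C\int_0^T\!\!\int_0^1\frac{\theta^\beta\theta_x^2}{v\theta^{1+\alpha}}\,dx\,dt
+C\int_0^T\!\!\int_0^1 v\,\theta^{1+\alpha-\beta}\,dx\,dt,
\end{equation*}
and the point of the choice of $\alpha$ is that $1+\alpha-\beta<1$, so the last term is again $\le\frac12\int_0^T\max_x\theta\,dt+C$ by Young's inequality, $\int_0^1 v\,dx=1$, and the lower bounds $v,\theta\ge C^{-1}$ already available from \eqref{bound of density}. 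Combining the two displays closes the estimate and yields $\int_0^T\max_x\theta\,dt\le C$ for every $\beta>0$. Your diagnosis that a single Cauchy--Schwarz against the $\theta^{-2}$-weighted dissipation in $V(t)$ leaves the uncontrollable factor $\int v\theta^{2-\beta}$ is accurate; the remedy is simply to generate a fresh dissipation term with the softer weight $\theta^{-(1+\alpha)}$ rather than to track the growth of $v$. With this lemma in hand, the rest of your argument goes through as you wrote it.
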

\begin{proof}
Firstly,  from \eqref{basic energy inequality}, we have
 \begin{equation*}
   \frac{\epsilon}{4}\int_0^1(\phi^2-1)^2\textcolor[rgb]{1.00,0.00,0.00}{dx}\leq  E_0,
 \end{equation*}
 which implies that
  \begin{equation}\label{phi L4}
   \int_0^1\phi^4\textcolor[rgb]{1.00,0.00,0.00}{dx}\leq C.
 \end{equation}
Moreover, for $\forall (x,t)\in[0,1]\times[0,T]$,
\begin{eqnarray}\label{lower bound of phi}
% \nonumber to remove numbering (before each equation)
  |\phi(x,t)|&\leq&\left|\int_0^1\big(\phi(x,t)-\phi(y,t)\big)dy\right|+\left|\int_0^1\phi(y,t)dy\right|\notag \\
 &\leq&\left|\int_0^1\big(\int_y^x\phi_\xi(\xi,t)d\xi\big)dy\right| +CE_0\notag\\
  &\leq& \big(\int_0^1\frac{\phi_x^2}{v}dx\big)^{\frac12}+CE_0\notag\\
  &\leq& C.
\end{eqnarray}

Next, for $0<\alpha< 1 $ and $0<\varepsilon<1,$  integrating \eqref{NSFAC-Lagrange}$_5$ multiplied by $\theta^{-\alpha}$ over $ (0,1)\times(0,T)$ yields
\begin{align}\label{theta11}
&\int_0^T\int_0^1\frac{\alpha\theta^\beta\theta_x^2}{v\theta^{\alpha+1}}dxdt+\int_0^T\int_0^1\frac{u_x^2 +(v\mu)^2}{v\theta^\alpha}dxdt \nonumber\\
& =  \frac{1}{1-\alpha}\int_0^1\left(\theta^{1-\alpha}-\theta_0^{1-\alpha}\right)dx+\int_0^T\int_0^1\frac{\theta^{1-\alpha}u_x}{v} dxdt \nonumber\\
&  \leq C(\alpha)+\frac{1}{2}\int_0^T\int_0^1\frac{u_x^2}{v\theta^\alpha}dxdt+C\int_0^T\int_0^1 \theta^{2-\alpha} \nonumber dxdt \\
&\leq C(\alpha)+\frac{1}{2}\int_0^T\int_0^1\frac{u_x^2}{v\theta^\alpha}dxdt
+C\int_0^T\max_{x\in[0,1]}\theta^{1-\alpha}\int_0^1\theta dxdt \nonumber\\
&\leq C(\alpha,\varepsilon)+\frac{1}{2}\int_0^T\int_0^1\frac{u_x^2}{v\theta^\alpha}dxdt+\varepsilon \int_0^T\max_{x\in[0,1]}\theta dt,
\end{align}
where in the first inequality we have used \eqref{basic energy inequality} and \eqref{bound of density}.
Then,  for $ \alpha=\min\{1,\beta\}/2,$ using \eqref{bound of density}, we get
\begin{align}\label{theta12}
\int_0^T\max_{x\in[0,1]}\theta dt &\leq C+C\int_0^T\int_0^1|\theta_x|dxdt \nonumber\\
&\leq C+C\int_0^T\int_0^1\frac{ \theta^\beta\theta_x^2}{v\theta^{1+\alpha}}dxdt+C\int_0^T\int_0^1\frac{v\theta^{1+\alpha}}{\theta^\beta}dxdt \nonumber\\
&\leq C +C\int_0^T\int_0^1\frac{ \theta^\beta\theta_x^2}{v\theta^{1+\alpha}}dxdt+\frac{1}{2}\int_0^T\max_{x\in[0,1]}\theta dt,
\end{align}
which together with \eqref{theta11} yields that
\begin{align}\label{theta13}
\displaystyle  \int_0^T\max_{x\in[0,1]}\theta dt\leq C,
\end{align}
and then that for $0<\alpha< 1, $
\begin{align}\label{theta14}
\displaystyle  \int_0^T\int_0^1\frac{ \theta^\beta\theta_x^2}{v\theta^{\alpha+1}}dxdt \leq C(\alpha).
\end{align}

% Secondly, from \eqref{conservation} once again, by using H$\mathrm{\ddot{o}}$lder inequality, we get
% \begin{eqnarray*}
% % \nonumber to remove numbering (before each equation)
%   \left|\theta^{\frac{1}{2}}(x,t)- \bar\theta^{\frac{1}{2}}(t) \right|&\leq&\left|\int_0^1(\theta^{\frac{1}{2}}(x,t)-\theta^{\frac{1}{2}}(y,t))dy\right| \notag\\
%   &\le& \frac{1}{2}\left(\int_0^1 \frac{\theta_x^2}{\theta^2 v} dx\right)^{1/2}\left(\int_0^1  {\theta v} dx\right)^{1/2}\notag\\ &\le& C V^{1/2}(t)  \max_{x\in [0,1]}v^{1/2}(x,t),
% \end{eqnarray*}
% thus we obtain, for all $(x,t)\in [0,1]\times [0,\infty)$, the following inequality holds
% \begin{equation}\label{the upper and lower bounds of temperature}
% \alpha_1-CV(t)\max_{x\in [0,1]}v(x,t)\le  \theta(x,t)\le C +CV(t)\max_{x\in [0,1]}v(x,t).
% \end{equation}

Finally, we can give the upper bounds  of $v$. In fact,  combining the  expression of $v$ \eqref{expression of v}  with the upper and lower bound estimates \eqref{upper and lower bound for D}--\eqref{the lower bound of v}, we have the following inequality
\begin{align}\label{the upper bound estimate for v 1}
  v(x,t)&=D(x,t) Y(t) + \int_0^t\frac{D(x,t) Y(t)\big(\theta(x,\tau)+\frac{\epsilon}{2}\frac{\phi^2_x(x,\tau)}{v(x,\tau)}\big)}{ D(x,\tau) Y(\tau)}d\tau \notag\\
&\le C+C\int_0^t e^{-\alpha_1(t-\tau)}\Big(\max_{x\in[0,1]} \theta (x,\tau)+\max_{x\in[0,1]}\big(\frac{\phi_x(x,\tau)}{v(x,\tau)}\big)^2\max_{x\in[0,1]}v(x,\tau)\Big)d\tau.
\end{align}
In view of \eqref{NSFAC-Lagrange}$_4$,  we derive that
\begin{equation}\label{second derivative form for phi}
 \epsilon\Big(\frac{\phi_x}{v}\Big)_x=-\mu+\frac{1}{\epsilon}(\phi^3-\phi),
\end{equation}
and then combining with \eqref{basic energy inequality}, \eqref{conservation}, \eqref{bound of density}, \eqref{lower bound of phi}, we obtain
\begin{equation}\label{estimate for second derivative form of phi}
\int_0^1\Big(\frac{\phi_x}{v}\Big)_x^2\frac{v}{\theta}dx\leq C\big(1+V(t)\big).
\end{equation}
Using \eqref{basic energy inequality}, \eqref{conservation}, \eqref{the lower bound of v}, \eqref{lower bound of phi}, \eqref{estimate for second derivative form of phi}, we get
\begin{eqnarray}\label{The square modulus of the first derivative for phi}
 \max_{x\in[0,1]}\big(\frac{\phi_x}{v}\big)^2(x,t)& \leq& C\int_0^1\frac{\phi_x}{v}\Big(\frac{\phi_x}{v}\Big)_xdx\notag\\
 &\leq&C\int_0^1\frac{\theta}{v^2}\frac{\phi_x^2}{v}dx+\int_0^1\Big(\frac{\phi_x}{v}\Big)_x^2\frac{v}{\theta}dx\notag\\
  &\leq& C\big(\max_{x\in[0,1]} \theta (x,t)+1+V(t)\big).
\end{eqnarray}
 Substituting \eqref{The square modulus of the first derivative for phi} into \eqref{the upper bound estimate for v 1}, we obtain
 \begin{eqnarray}\label{the upper bound estimate for v}
  v(x,t)\leq C+C\int_0^t\big(\max_{x\in[0,1]} \theta (x,t)+1+V(t)\big)\max_{x\in[0,1]} v(x,\tau)d\tau,
\end{eqnarray}
by using Gronwall's inequality, we get the upper bound of $v$
\begin{equation}\label{the upper bound of v}
 v(x,t)\leq C,\qquad\forall (x,t)\in [0,1]\times [0,T].
\end{equation}
The proof of Lemma \ref{the upper bounds of density} is completed.
\end{proof}

\begin{lemma}\label{the square modulus estimate for density}
Let $(v,u,\theta,\phi)$ be a smooth solution of \eqref{NSFAC-Lagrange}-\eqref{initial condition} on $[0,1]\times [0,T]$. Then it holds that for $\forall(x,t)\in[0,1]\times [0,T]$
 \begin{equation}\label{bound of the square modulus of the first derivative}
\sup_{0\leq t\leq T}\int_0^1v_x^2dx\leq C,\qquad \int_0^T\int_0^1\phi_{xx}^2dx\leq C,\qquad\int_0^T\int_0^1\phi_t^2dx\leq C.
 \end{equation}
\end{lemma}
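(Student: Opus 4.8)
The plan is to obtain the three bounds simultaneously through a coupled energy argument: a Kazhikhov--Shelukhin-type estimate for $v_x$ combined with a parabolic energy estimate for $\phi$, closed by Gronwall's inequality driven by the time-integrable quantities $\int_0^T\max_{x}\theta\,dt\le C$ (from \eqref{theta13}), $\int_0^TV\,dt\le C$ (from \eqref{basic energy inequality}), and $\int_0^T\int_0^1\frac{\theta_x^2}{v\theta}\,dx\,dt\le C$. The last bound I would establish as a preliminary step, splitting into cases: when $\beta<1$ it is \eqref{theta14} with the choice $\alpha=\beta\in(0,1)$, while when $\beta\ge1$ one writes $\frac{\theta_x^2}{v\theta}=\theta^{1-\beta}\frac{\theta^\beta\theta_x^2}{v\theta^2}\le C\frac{\theta^\beta\theta_x^2}{v\theta^2}$ using $\theta\ge C^{-1}$ from \eqref{bound of density}, so that the $V$-term in \eqref{basic energy inequality} controls it.

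For the density, I would introduce $W:=\frac{v_x}{v}-u$. Since $v_t=u_x$ gives $\frac{u_x}{v}=(\ln v)_t$ and hence $\left(\frac{u_x}{v}\right)_x=\left(\frac{v_x}{v}\right)_t$, the momentum equation \eqref{NSFAC-Lagrange}$_2$ becomes $W_t=\left(\frac{\theta}{v}\right)_x+\frac{\epsilon}{2}\left(\frac{\phi_x^2}{v^2}\right)_x$. Expanding $\left(\frac{\theta}{v}\right)_x=\frac{\theta_x}{v}-\frac{\theta}{v}(W+u)$, multiplying by $W$ and integrating over $(0,1)$ yields on the left the damping term $\int_0^1\frac{\theta}{v}W^2\,dx$, which is genuine since $\frac{\theta}{v}\ge C^{-1}$ by \eqref{bound of density}--\eqref{bound of phi}. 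The term $\int\frac{\theta_x}{v}W$ is absorbed by the damping plus $C\int\frac{\theta_x^2}{v\theta}$, and $\int\frac{\theta}{v}uW$ by the damping plus $C\max_x\theta\int_0^1u^2\,dx$, both integrable in time. For the capillary term, after expanding $\left(\frac{\phi_x^2}{v^2}\right)_x=\frac{2\phi_x\phi_{xx}}{v^2}-\frac{2\phi_x^2v_x}{v^3}$ and using $v_x=v(W+u)$, I would bound it by $\delta\int_0^1\phi_{xx}^2\,dx+C\max_x\phi_x^2\big(\int_0^1W^2\,dx+1\big)$.

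For the phase field, I would write the Allen--Cahn equation as $\phi_t=\epsilon\phi_{xx}-\epsilon\frac{\phi_xv_x}{v}-\frac{v}{\epsilon}(\phi^3-\phi)$, multiply by $-\phi_{xx}$, and integrate; the boundary condition $\phi_x|_{\partial}=0$ turns the left side into $\frac12\frac{d}{dt}\int_0^1\phi_x^2\,dx+\epsilon\int_0^1\phi_{xx}^2\,dx$, and the right side is bounded by $\frac{\epsilon}{2}\int\phi_{xx}^2+C\max_x\phi_x^2\int v_x^2+C$ using the boundedness of $v$ and $\phi$. The essential input is \eqref{The square modulus of the first derivative for phi}, which gives $\max_x\phi_x^2\le C(\max_x\theta+1+V)$, together with $\int v_x^2\le C\int W^2+C$. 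Adding the two estimates and choosing $\delta$ small to absorb $\delta\int\phi_{xx}^2$, I obtain for $\Phi(t):=\int_0^1(W^2+\phi_x^2)\,dx$ a differential inequality $\frac{d}{dt}\Phi+c\int_0^1\phi_{xx}^2\,dx+c\int_0^1\frac{\theta}{v}W^2\,dx\le A(t)(\Phi+1)+B(t)$, where $A(t)=C(\max_x\theta+1+V)$ and $B(t)=C\int\frac{\theta_x^2}{v\theta}\,dx+CA(t)$ both have finite integral over $[0,T]$. Gronwall's inequality then gives $\sup_t\Phi\le C$, and integrating recovers $\int_0^T\int_0^1\phi_{xx}^2\le C$. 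The bound $\sup_t\int_0^1v_x^2\le C$ follows from $v_x=v(W+u)$, and $\int_0^T\int_0^1\phi_t^2\le C$ from the expression $\phi_t=\epsilon\phi_{xx}-\epsilon\frac{\phi_xv_x}{v}-\frac{v}{\epsilon}(\phi^3-\phi)$, using the bounds just obtained and $\int_0^T\max_x\phi_x^2\,dt\le C$.

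The main obstacle is the genuine two-way coupling between $v_x$ and $\phi_x$: the capillary stress $\frac{\epsilon}{2}(\phi_x^2/v^2)_x$ injects $\phi_{xx}$ and $v_x$ into the density estimate, while the term $\frac{\phi_xv_x}{v}$ injects $v_x$ into the phase-field estimate, so neither estimate closes on its own. Closing the loop requires that every coefficient multiplying $\Phi$ be not merely bounded but \emph{integrable in time}, which is precisely what the preliminary estimates $\int_0^T\max_x\theta\,dt\le C$, $\int_0^TV\,dt\le C$ and the sharp gradient bound \eqref{The square modulus of the first derivative for phi} supply; verifying the time-integrability of $\int_0^1\frac{\theta_x^2}{v\theta}\,dx$ uniformly across the regimes $\beta<1$ and $\beta\ge1$ is the delicate point that makes the combined Gronwall argument work.
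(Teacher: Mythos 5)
Your proposal is correct, and it reaches the three bounds by a route that is organizationally different from the paper's. The paper proceeds \emph{sequentially}: it first closes the estimate for $u-\frac{v_x}{v}$ on its own, handling the capillary term $\int\!\!\int\epsilon\frac{\phi_x}{v}\big(\frac{\phi_x}{v}\big)_x\big(u-\frac{v_x}{v}\big)$ by means of the chemical-potential identity $\epsilon\big(\frac{\phi_x}{v}\big)_x=-\mu+\frac{1}{\epsilon}(\phi^3-\phi)$, which via \eqref{estimate for second derivative form of phi} and the auxiliary bounds $\int_0^T\max_x\theta^2\,dt\leq C$ and $\int_0^T\!\!\int_0^1\big(u_x^2+(v\mu)^2\big)\,dx\,dt\leq C$ (both proved inside the lemma) controls $\int_0^T\!\!\int_0^1\big(\frac{\phi_x}{v}\big)_x^2\,dx\,dt$ without ever invoking the parabolic structure of the Allen--Cahn equation; only afterwards, with $\sup_t\int v_x^2\,dx\leq C$ in hand, does it run the $\phi_{xx}$ energy estimate. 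You instead expand the capillary stress directly in terms of $\phi_{xx}$, absorb the resulting $\delta\int\phi_{xx}^2$ into the Allen--Cahn dissipation, and close a single coupled Gronwall inequality for $\int(W^2+\phi_x^2)\,dx$; you also absorb the $\theta_x$ cross term into the damping $\int\frac{\theta}{v}W^2\,dx$ at the price of the preliminary bound $\int_0^T\!\!\int_0^1\frac{\theta_x^2}{v\theta}\,dx\,dt\leq C$ with its case split on $\beta$, where the paper uses $\int_0^T\max_x\theta^2\,dt\leq C$ instead. Both arguments are sound: yours is more self-contained for this lemma and does not need the $\mu$-identity or the $L^2_{t,x}$ bound on $u_x$ and $v\mu$; the paper's detour has the advantage that the by-products \eqref{max theta^2} and \eqref{First derivative squared modulus} are reused repeatedly in Lemmas \ref{phi-xxx}--\ref{theta-x}, so following your route you would still have to establish them separately before proceeding.
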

\begin{proof} Firstly, we rewrite the momentum equation  in Lagrange coordinate system \eqref{NSFAC-Lagrange}$_2$ as follows
\begin{equation}\label{Another form of the momentum equation}
 \Big(u-\frac{v_x}{v}\Big)_t=-\Big(\frac{\theta}{v}+\frac\epsilon2\big(\frac{\phi_x}{v}\big)^2\Big)_x
\end{equation}
Multiplying \eqref{Another form of the momentum equation} by $u-\frac{v_x}{v}$, integrating by parts over $[0,1]$  with respect to $x$, we have
\begin{eqnarray}\label{the square modulus estimate for density-1}
&&\frac12\int_0^1\Big(u-\frac{v_x}{v}\Big)^2(x,t)dx-\frac12\int_0^1\Big(u-\frac{v_x}{v}(x,0)\Big)^2dx \notag\\
&&=\int_0^t\int_0^1\Big(\frac{\theta v_x}{v^2}-\frac{\theta_x}{v}-\epsilon\frac{\phi_x}{v}\big(\frac{\phi_x}{v}\big)_x\Big)\Big(u-\frac{v_x}{v}\Big)dxdt\notag \\
&&=-\int_0^t\int_0^1\frac{\theta v_x^2}{v^3}dxdt+\int_0^t\int_0^1\frac{\theta u v_x}{v^2}dxdt \notag\\
&&\ \ \ \ \ -\int_0^t\int_0^1\frac{\theta_x}{v}\Big(u-\frac{v_x}{v}\Big)dxdt
-\int_0^t\int_0^1\epsilon\frac{\phi_x}{v}\big(\frac{\phi_x}{v}\big)_x\Big(u-\frac{v_x}{v}\Big)dxdt.
 \end{eqnarray}
Now we give  the last three terms on the right side of \eqref{the square modulus estimate for density-1}. First, by using \eqref{basic energy inequality}, \eqref{the lower bound of v}, \eqref{theta13} and \eqref{the upper bound of v},  we have
\begin{align}\label{I1}
 \Big|\int_0^t\int_0^1\frac{\theta u v_x}{v^2}dxdt\Big|& \leq\frac{1}{2}\int_0^t\int_0^1\frac{\theta v_x^2}{v^3}dxdt+\frac{1}{2}\int_0^t\int_0^1\frac{u^2\theta}{v}dxdt\notag\\
& \leq\frac{1}{2}\int_0^t\int_0^1\frac{\theta v_x^2}{v^3}dxdt+C\int_0^t\max_{x\in[0,1]}\theta dt\notag\\
& \leq\frac{1}{2}\int_0^t\int_0^1\frac{\theta v_x^2}{v^3}dxdt+C.
\end{align}
Next, for the third term on the righthand side of \eqref{the square modulus estimate for density-1}, by using \eqref{basic energy inequality} and \eqref{bound of density} we obtain
\begin{align}\label{I2}
\Big| \int_0^t\int_0^1\frac{\theta_x}{v}\Big(u-\frac{v_x}{v}\Big)dxdt\Big|
&\leq \int_0^t\int_0^1\frac{\theta^\beta\theta_x^2}{v\theta^2}dxdt+\frac{1}{2} \int_0^t\int_0^1\frac{\theta^2}{v\theta^\beta}\Big(u-\frac{v_x}{v}\Big)^2dxdt\notag\\
&\leq C+C\int_0^t\max_{x\in[0,1]}\theta^2\int_0^1\Big(u-\frac{v_x}{v}\Big)^2dxdt.
\end{align}
Moreover, it follows from \eqref{basic energy inequality}, \eqref{bound of density}, \eqref{bound of phi},  \eqref{theta13} and \eqref{theta14} that, for any $\varepsilon>0,$
\begin{align}\label{theta 2}
% \nonumber to remove numbering (before each equation)
  \int_0^T\max_{x\in[0,1]}\theta^2dt
  &\leq C\int_0^T \max_{x\in[0,1]}\Big|\theta^2-\int_0^1\theta^2dx\Big|dt+C\int_0^T\max_{x\in[0,1]}\theta dt\notag \\
  &\leq C+\int_0^T\int_0^1\theta|\theta_x|dxdt \notag\\
  &\leq C+C\int_0^T\int_0^1\frac{\theta^\beta\theta_x^2}{v\theta^{\alpha+1}}dxdt+\varepsilon\int_0^T\int_0^1v\theta^{3+\alpha-\beta}dxdt \notag\\
   &\leq C+C\varepsilon\int_0^1\max_{x\in[0,1]}\theta^2dxdt,
 \end{align}
which follows that
\begin{equation}\label{max theta^2}
 \int_0^T\max_{x\in[0,1]}\theta^2dt \leq C.
\end{equation}
Moreover, integrating \eqref{NSFAC-Lagrange}$_5$ over $[0,1]\times[0,T]$, combining with \eqref{the lower bound of v}, \eqref{the upper bound of v}, \eqref{max theta^2}, we have
\begin{eqnarray}
% \nonumber to remove numbering (before each equation)
  \int_0^T\int_0^1\frac{u_x^2+(v\mu)^2}{v}dxdt&=&\int_0^1\theta dx-\int_0^1\theta_0dx+\int_0^T\int_0^1\frac{\theta}{v}u_x dx\notag\\
   &\leq& C+\frac{1}{2}\int_0^T\int_0^1\frac{u_x^2}{v}dxdt,
\end{eqnarray}
which implies that
\begin{equation}\label{First derivative squared modulus}
 \int_0^T\int_0^1\big(u_x^2+(v\mu)^2\big)dxdt\leq C,
\end{equation}
Finally, for the fourth term on the righthand side of \eqref{the square modulus estimate for density-1}, by using \eqref{bound of density}, \eqref{estimate for second derivative form of phi}, \eqref{the upper bound of v}, we get
\begin{eqnarray}\label{I3}
% \nonumber to remove numbering (before each equation)
 &&\Big|\int_0^t\int_0^1\epsilon\frac{\phi_x}{v}\big(\frac{\phi_x}{v}\big)_x\Big(u-\frac{v_x}{v}\Big)dxdt\Big|\notag\\
 &&\leq C\int_0^t\int_0^1\Big(\big|(\frac{\phi_x}{v})_x\big|^2+\big|\frac{\phi_x}{v}\big|^2\big(u-\frac{v_x}{v}\big)^2\Big)dxdt\notag\\
 && \leq C+C\int_0^t\max_{x\in[0,1]}\big|\frac{\phi_x}{v}\big|^2\int_0^1\big(u-\frac{v_x}{v}\big)^2dxdt
\end{eqnarray}
Substituting \eqref{I1}, \eqref{I2}, \eqref{I3} into \eqref{the square modulus estimate for density-1}, combining with \eqref{The square modulus of the first derivative for phi}, \eqref{max theta^2} and Gronwall's inequality, we have
\begin{equation}\label{estimate for v-x^2}
\sup_{0\leq t\leq T} \int_0^1\Big(u-\frac{v_x}{v}\Big)^2dx+\int_0^T\int_0^1\frac{\theta v_x^2}{v^3}dxdt\leq C,
\end{equation}
together with \eqref{basic energy inequality}, \eqref{bound of density} and \eqref{bound of phi},  we derive that
 \begin{equation*}
   \sup_{0\leq t\leq T}\int_0^1v_x^2dx\leq C.
 \end{equation*}

Secondly, we rewrite  \eqref{NSFAC-Lagrange}$_{3,4}$ as follows
\begin{equation}\label{Allen-Cahn-Lagrange}
 \phi_t-\epsilon\phi_{xx}=-\epsilon\frac{\phi_xv_x}{v}-\frac{v}{\epsilon}\big(\phi^3-\phi\big),
\end{equation}
multiplying \eqref{Allen-Cahn-Lagrange} by $\phi_{xx}$ and integrating the resultant over $[0,1]$, by using \eqref{bound of density}, \eqref{bound of phi}, \eqref{The square modulus of the first derivative for phi},  we obtain
\begin{align}
&\quad\frac{1}{2}\frac{d}{dt}\int_0^1\phi_x^2dx+\epsilon\int_0^1\phi_{xx}^2dx\notag\\
&=\epsilon\int_0^1\frac{\phi_xv_x}{v}\phi_{xx}dx+ \frac{1}{\epsilon}\int_0^1 v(\phi^3-\phi)\phi_{xx}dx\notag\\
& \leq C\Big(\int_0^1\phi_x^2v_x^2dx+\int_0^1 (\phi^3-\phi)^2dx\Big)+\frac{\epsilon}{2}\int_0^1\phi_{xx}^2dx\notag\\
&\leq  C\Big(\max_{x\in[0,1]}\phi_x^2(x,t)\int_0^1v_{x}^2dx+\int_0^1\phi^2dx\Big)+\frac{\epsilon}{2}\int_0^1\phi_{xx}^2dx\notag\\
&\leq   C\big(\max_{x\in[0,1]} \theta (x,t)+1+V(t)\big)+\frac{\epsilon}{2}\int_0^1\phi_{xx}^2dx,
\end{align}
combining with \eqref{basic energy inequality} and \eqref{theta13}, we have
\begin{equation}\label{phi-xx-L2}
  \int_0^T\int_0^1\phi_{xx}^2dx\leq C.
\end{equation}

Finally, for $\phi_t$, from \eqref{Allen-Cahn-Lagrange}, we get
\begin{equation}\label{Allen-Cahn-Lagrange-0}
 \phi_t=\epsilon\phi_{xx}-\epsilon\frac{\phi_xv_x}{v}-\frac{v}{\epsilon}\big(\phi^3-\phi\big),
\end{equation}
integrating \eqref{Allen-Cahn-Lagrange-0} over $[0,1]$, we arrive
\begin{eqnarray}\label{estimate of phi-xx-l2}
 \int_0^1\phi_t^2dx&\leq&C\Big(\int_0^1\phi_{xx}^2dx+\int_0^1\phi_x^2v_x^2dx+\int_0^1\big(\phi^3-\phi\big)^2dx\Big)\notag\\
 &\leq&C\Big(\int_0^1\phi_{xx}^2dx+\int_0^1v_x^2dx\int_0^1\phi_{xx}^2dx+1\Big)\notag\\
 &\leq&C\Big(\int_0^1\phi_{xx}^2dx+1\Big),
\end{eqnarray}
then, by using \eqref{phi-xx-L2}, we arrive
\begin{equation}\label{phi-t-L2}
  \int_0^T\int_0^1\phi_t^2dx\leq C.
\end{equation}
The proof of Lemma \ref{the square modulus estimate for density} is finished.
\end{proof}

In order to get the a priori estimates on $u$, we present some prior estimates for higher derivatives of $\phi$.
\begin{lemma}\label{phi-xxx}
Let $(v,u,\theta,\phi)$ be a smooth solution of \eqref{NSFAC-Lagrange}-\eqref{initial condition} on $[0,1]\times [0,T]$. Then it holds that for $\forall(x,t)\in[0,1]\times [0,T]$
 \begin{equation}\label{bound of the square modulus of the third derivative for phi}
\sup_{0\leq t\leq T}\int_0^1\phi_{xx}^2dx+\int_0^T\int_0^1\big(\phi_{xt}^2+\big(\frac{\phi_{x}}{v}\big)_{xx}^2\big)dx\leq C.
 \end{equation}
\end{lemma}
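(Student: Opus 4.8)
The plan is to control all three quantities in \eqref{bound of the square modulus of the third derivative for phi} through a single parabolic energy estimate for the variable $w\overset{\text{def}}{=}\frac{\phi_x}{v}$, whose chief advantage is that the second spatial derivative of $v$ never appears explicitly: at this stage we only have $\sup_t\int_0^1 v_x^2\,dx\le C$ from \eqref{bound of the square modulus of the first derivative}, while the bounds on $u_{xx}$, hence on $v_{xx}$, are not yet available (indeed they are obtained \emph{after} this lemma). Note first that $w=0$ on $\{x=0\}\cup\{x=1\}$ by \eqref{boundary condition}, and that $\|w\|_{L^2}^2=\int_0^1\phi_x^2/v^2\,dx\le C$ by \eqref{basic energy inequality}, \eqref{bound of density} and \eqref{the upper bound of v}.

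First I would derive the equation satisfied by $w$. Writing $\phi_t=-\frac{v}{\epsilon}(\phi^3-\phi)+\epsilon v\,w_x$ from \eqref{NSFAC-Lagrange}$_{3,4}$, differentiating in $x$, dividing by $v$ and using $v_t=u_x$ together with $w_t=\frac{\phi_{xt}}{v}-\frac{\phi_x u_x}{v^2}$, one obtains the forced heat equation
\begin{equation*}
 w_t-\epsilon w_{xx}=\frac{\epsilon v_x}{v}w_x-\frac{(3\phi^2-1)\phi_x}{\epsilon}-\frac{v_x(\phi^3-\phi)}{\epsilon v}-\frac{\phi_x u_x}{v^2},
\end{equation*}
which carries no $v_{xx}$ on its right-hand side. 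Multiplying by $-w_{xx}$ and integrating over $(0,1)$, the boundary term vanishes since $w_t=0$ at $x=0,1$, yielding $\frac12\frac{d}{dt}\int_0^1 w_x^2\,dx+\epsilon\int_0^1 w_{xx}^2\,dx=-\int_0^1(\mathrm{RHS})\,w_{xx}\,dx$. Setting $A(t)\overset{\text{def}}{=}\int_0^1 w_x^2\,dx$, I would estimate the four terms on the right by Young's inequality, absorbing a small multiple $\delta\int w_{xx}^2$ on the left each time. The terms from $\frac{(3\phi^2-1)\phi_x}{\epsilon}$ and $\frac{v_x(\phi^3-\phi)}{\epsilon v}$ are bounded by $C\int\phi_x^2+C\int v_x^2\le C$ using $|\phi|\le C$ from \eqref{bound of phi}, \eqref{bound of density} and \eqref{bound of the square modulus of the first derivative}; the term from $\frac{\epsilon v_x}{v}w_x$ is handled by $\int v_x^2 w_x^2\le\max_x w_x^2\int v_x^2\le\delta\int w_{xx}^2+C A$ via the interpolation $\|w_x\|_{L^\infty}^2\le C\|w_x\|_{L^2}\|w_{xx}\|_{L^2}+C\|w_x\|_{L^2}^2$.

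The main obstacle is the last term $-\frac{\phi_x u_x}{v^2}$, since only the space--time bound $\int_0^T\int_0^1 u_x^2\le C$ from \eqref{First derivative squared modulus} is available, not a pointwise-in-time control of $\|u_x\|_{L^2}$. The resolution is to exploit $w=0$ on the boundary: $\max_x w^2\le 2\|w\|_{L^2}\|w_x\|_{L^2}\le C A^{1/2}$, so that $\int\phi_x^2 u_x^2=\int v^2w^2 u_x^2\le C\max_x w^2\int u_x^2\le C A^{1/2}\|u_x\|_{L^2}^2\le C(A+1)\|u_x\|_{L^2}^2$, which is exactly of Gronwall form with the integrable coefficient $\|u_x\|_{L^2}^2$. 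Collecting everything gives $\frac{d}{dt}A+\epsilon\int_0^1 w_{xx}^2\,dx\le C\big(1+\|u_x\|_{L^2}^2\big)A+C\big(1+\|u_x\|_{L^2}^2\big)$; since $\int_0^T(1+\|u_x\|_{L^2}^2)\,dt\le C$, Gronwall's inequality yields $\sup_{0\le t\le T}A(t)\le C$ and, integrating back, $\int_0^T\int_0^1 w_{xx}^2\,dx\,dt\le C$. The latter is precisely the bound on $\int_0^T\int_0^1\big(\frac{\phi_x}{v}\big)_{xx}^2$.

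Finally I would translate these back to $\phi$. From $\phi_{xx}=v\,w_x+\frac{\phi_x v_x}{v}$, together with $\sup_t\max_x w^2\le C$ (hence $\sup_t\max_x\phi_x^2\le C$) and $\sup_t\int v_x^2\le C$, one obtains $\sup_{0\le t\le T}\int_0^1\phi_{xx}^2\,dx\le C$. Inserting the established bounds into $\phi_{xt}=-\frac1\epsilon\big(v_x(\phi^3-\phi)+v(3\phi^2-1)\phi_x\big)+\epsilon\big(v_x w_x+v w_{xx}\big)$ then gives $\int_0^T\int_0^1\phi_{xt}^2\,dx\,dt\le C$, where $\int v_x^2 w_x^2\le\max_x w_x^2\int v_x^2$ is controlled because $\int_0^T\max_x w_x^2\,dt\le C$ follows from $\sup_t A\le C$ and $\int_0^T\int_0^1 w_{xx}^2\le C$. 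This establishes all three estimates in \eqref{bound of the square modulus of the third derivative for phi}.
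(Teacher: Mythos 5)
Your proposal is correct and follows essentially the same route as the paper: both work with $w=\frac{\phi_x}{v}$ precisely to avoid $v_{xx}$, derive the same parabolic equation \eqref{Allen-Cahn-Lagrange-2}, control the troublesome $\frac{\phi_x u_x}{v^2}$ term through the $L^\infty$ bound on $w$ and a Gronwall argument with the integrable coefficient $\|u_x\|_{L^2}^2$ from \eqref{First derivative squared modulus}, and then recover the remaining quantity from the equation. The only difference is cosmetic: you test with $-w_{xx}$ (obtaining $\int_0^T\int w_{xx}^2$ directly and $\phi_{xt}$ afterwards), while the paper tests with $w_t$ (obtaining $\int_0^T\int w_t^2$ directly and $w_{xx}$ afterwards).
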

\begin{proof}
We rewrite  \eqref{Allen-Cahn-Lagrange} as
\begin{equation}\label{Allen-Cahn-Lagrange-1}
\frac{\phi_t}{v}-\epsilon\Big(\frac{\phi_x}{v}\Big)_{x}=-\frac{1}{\epsilon}(\phi^3-\phi),
\end{equation}
then, differentiating \eqref{Allen-Cahn-Lagrange-1} with respect to $x$, we have
\begin{equation}\label{Allen-Cahn-Lagrange-2}
 \Big(\frac{\phi_x}{v}\Big)_t-\epsilon\Big(\frac{\phi_x}{v}\Big)_{xx}=-\frac{1}{\epsilon}\big(\phi^3-\phi\big)_x+\frac{\phi_tv_x}{v^2}-\frac{\phi_x u_x}{v^2},
\end{equation}
multiplying \eqref{Allen-Cahn-Lagrange-2} by $\big(\frac{\phi_{x}}{v}\big)_t$ and integrating the resultant over $[0,1]$, by using \eqref{basic energy inequality}, \eqref{bound of density}, \eqref{bound of phi}, \eqref{bound of the square modulus of the first derivative}, \eqref{First derivative squared modulus} and \eqref{estimate of phi-xx-l2}, we obtain
\begin{eqnarray}
% \nonumber to remove numbering (before each equation)
&&\int_0^1\Big(\frac{\phi_{x}}{v}\Big)_t^2dx+ \frac{\epsilon}{2}\frac{d}{dt}\int_0^1\Big(\frac{\phi_x}{v}\Big)_{x}^2dx \notag\\
&& =-\frac{1}{\epsilon}\int_0^1\big(\phi^3-\phi\big)_x\big(\frac{\phi_{x}}{v}\big)_tdx+\int_0^1\frac{\phi_tv_x}{v^2}\big(\frac{\phi_{x}}{v}\big)_tdx-\int_0^1\frac{\phi_x u_x}{v^2}\big(\frac{\phi_{x}}{v}\big)_tdx\notag\\
&&\leq C\Big(\int_0^1\big(3\phi^2-1\big)^2\phi_x^2dx+\int_0^1\phi_t^2v_x^2dx+\int_0^1\phi_x^2 u_x^2dx\Big)+\frac{1}{3}\int_0^1\big(\frac{\phi_{x}}{v}\big)_t^2dx \notag\\
&&\leq C\Big(1+\|\phi_t\|_{L^\infty}^2\int_0^1v_x^2dx+\|\frac{\phi_{x}}{v}\|_{L^\infty}^2\int_0^1u_x^2dx\Big)
+\frac{1}{3}\int_0^1\big(\frac{\phi_{x}}{v}\big)_t^2dx\notag\\
&&\leq C\Big(1+\int_0^1\big(\phi_t^2+2|\phi_t\phi_{xt}|\big)dx+\int_0^1\Big(\frac{\phi_x}{v}\Big)_{x}^2dx\int_0^1u_x^2dx\Big)
+\frac{1}{3}\int_0^1\big(\frac{\phi_{x}}{v}\big)_t^2dx\notag\\
&&\leq C\Big(1+\int_0^1\phi_{xx}^2dx+\int_0^1\phi_{xt}^2dx+\int_0^1\Big(\frac{\phi_x}{v}\Big)_{x}^2dx\int_0^1u_x^2dx\Big)
+\frac{1}{3}\int_0^1\big(\frac{\phi_{x}}{v}\big)_t^2dx\notag\\
&&\leq C\Big(1+\int_0^1\phi_{xx}^2dx+\int_0^1u_x^2dx\int_0^1\Big(\frac{\phi_x}{v}\Big)_{x}^2dx\Big)
+\frac{1}{2}\int_0^1\big(\frac{\phi_{x}}{v}\big)_t^2dx,
\end{eqnarray}
where in the last inequality we have used $\phi_{xt}=\big(\big(\frac{\phi_x}{v}\big)_t+\frac{\phi_x u_x}{v^2}\big)v$. Thus, by Gronwall's inequality, we have
\begin{equation}\label{higher derivative energy estimation for phi}
 \sup_{t\in[0,T]} \int_0^1\Big(\frac{\phi_x}{v}\Big)_{x}^2dx+\int_0^T\int_0^1\Big(\frac{\phi_{x}}{v}\Big)_t^2dx\leq C,
\end{equation}
combining with  \eqref{basic energy inequality}, \eqref{bound of the square modulus of the first derivative}, we  get
\begin{equation}
\sup_{0\leq t\leq T}\int_0^1\phi_{xx}^2dx+\int_0^T\int_0^1\phi_{xt}^2dx\leq C.
 \end{equation}
Moreover,  by using the Sobolev embedding theorem for one dimension, \eqref{higher derivative energy estimation for phi} also implies
\begin{equation}\label{the upper and lower bounds of the derivative for phi}
  \max_{(x,t)\in[0,1]\times[0,T]}\big|\frac{\phi_x}{v}\big|^2\leq C+C\sup_{t\in[0,T]} \int_0^1\Big(\frac{\phi_x}{v}\Big)_{x}^2dx\leq C.
\end{equation}
Further, by using \eqref{Allen-Cahn-Lagrange-2} and the estimates obtained above, we achieve
\begin{equation}
\int_0^T\int_0^1\big(\frac{\phi_{x}}{v}\big)_{xx}^2dx\leq C,
 \end{equation}
  the proof of Lemma \ref{phi-xxx} is completed.
\end{proof}

After the above preparation work, now we can give a prior estimate of the velocity $u$.
\begin{lemma}\label{velocity-x}
Let $(v,u,\theta,\phi)$ be a smooth solution of \eqref{NSFAC-Lagrange}-\eqref{initial condition} on $[0,1]\times [0,T]$. Then it holds that for $\forall(x,t)\in[0,1]\times [0,T]$
 \begin{equation}\label{energy estimate of velocity}
\sup_{0\leq t\leq T}\int_0^1u_{x}^2dx+\int_0^T\int_0^1\big(u_{t}^2+u^2_{xx}\big)dx\leq C.
 \end{equation}
\end{lemma}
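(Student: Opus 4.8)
The plan is to test the momentum equation \eqref{NSFAC-Lagrange}$_2$ against $u_t$. Since $u$ vanishes on the boundary for all $t$, so does $u_t$, and integrating by parts in $x$ (all boundary terms dropping) together with $v_t=u_x$ yields the basic identity
\begin{equation*}
\int_0^1 u_t^2\,dx+\frac12\frac{d}{dt}\int_0^1\frac{u_x^2}{v}\,dx
=-\frac12\int_0^1\frac{u_x^3}{v^2}\,dx+\int_0^1\frac{\theta}{v}u_{xt}\,dx+\frac{\epsilon}{2}\int_0^1\frac{\phi_x^2}{v^2}u_{xt}\,dx.
\end{equation*}
The two $u_{xt}$–terms are rewritten as exact time derivatives, $\int\frac{\theta}{v}u_{xt}=\frac{d}{dt}\int\frac{\theta}{v}u_x-\int\big(\frac{\theta}{v}\big)_t u_x$ and similarly for the capillary term, after which the full time derivatives are carried to the left and the identity is integrated over $[0,T]$. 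The point of this reduction is that every term thrown to the right is controllable by Lemmas \ref{the square modulus estimate for density}--\ref{phi-xxx}, except for one genuinely coupled contribution coming from $\big(\frac{\theta}{v}\big)_t$, isolated below.

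For the convective cubic term I would use the one–dimensional Gagliardo--Nirenberg inequality $\max_{[0,1]}|u_x|\le C\|u_x\|_{L^2}^{1/2}\|u_{xx}\|_{L^2}^{1/2}+C\|u_x\|_{L^2}$ together with $\int_0^1|u_x|^3\le \max|u_x|\int_0^1 u_x^2$ and Young's inequality, so that $\int_0^T\int_0^1|u_x|^3$ is absorbed by a small multiple of $\int_0^T\int u_{xx}^2$, a small multiple of $\sup_t\int u_x^2$, and $C\int_0^T\int u_x^2$, the last being bounded by \eqref{First derivative squared modulus}. The capillary contributions are harmless: using $\max_{[0,1]}|\phi_x/v|\le C$ from \eqref{the upper and lower bounds of the derivative for phi}, $\sup_t\int\frac{\phi_x^2}{v}\le C$ from \eqref{basic energy inequality}, and $\int_0^T\int\phi_{xt}^2\le C$, $\int_0^T\int u_x^2\le C$ from \eqref{bound of the square modulus of the third derivative for phi} and \eqref{First derivative squared modulus}, the remainder terms $\int\frac{\phi_x\phi_{xt}u_x}{v^2}$ and $\int\frac{\phi_x^2u_x^2}{v^3}$ are integrable in time, while the boundary–in–time term $\frac{\epsilon}{2}\int\frac{\phi_x^2}{v^2}u_x$ is dominated by $\tfrac18\int\frac{u_x^2}{v}+C$ and absorbed.

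The essential difficulty is the pressure contribution. Replacing $\theta_t$ by the right–hand side of \eqref{NSFAC-Lagrange}$_5$ and integrating by parts produces the cross term $\int_0^T\int\frac{\theta^\beta\theta_x}{v^2}u_{xx}$, a coupling between the highest temperature gradient and $u_{xx}$; equivalently, estimating $\int(\theta/v)_x u_t$ directly by Young reduces the matter to $\int_0^T\int(\theta/v)_x^2\le C$. The part $\int_0^T\int\frac{\theta^2v_x^2}{v^4}$ is immediate from $\sup_t\int v_x^2\le C$ in \eqref{bound of the square modulus of the first derivative} and $\int_0^T\max\theta^2\le C$ in \eqref{max theta^2}, but the pure gradient part $\int_0^T\int\theta_x^2$ is \emph{not} available for general $\beta>0$ from the weighted dissipation \eqref{theta14} alone, and this is exactly where the degeneracy of $\kappa=\theta^\beta$ bites. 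I would therefore close the estimate by running it simultaneously with a temperature–gradient estimate: testing \eqref{NSFAC-Lagrange}$_5$ against $\theta_t$ controls $\sup_t\int\frac{\theta^\beta}{v}\theta_x^2$ and $\int_0^T\int\theta_t^2$, whose own bad term $\int\frac{u_x^2}{v}\theta_t$ is split into $\int\theta_t^2$ and $\int u_x^4$. Adding the two inequalities, the coupling terms $\int\theta^\beta\theta_x u_{xx}$ and $\int u_x^2\theta_t$ are absorbed into the dissipations $\int u_{xx}^2$ and $\int\theta_t^2$ by Young with small constants, the remaining factors being handled by $\sup_t\int v_x^2\le C$, the $\phi$–bounds, and $\int_0^T\max\theta^2\le C$. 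A single Gronwall argument then gives $\sup_t\int u_x^2\le C$ and $\int_0^T\int u_t^2\le C$. Finally $\int_0^T\int u_{xx}^2\le C$ follows by reading $u_{xx}=vu_t+\frac{u_xv_x}{v}+v\big(\frac{\theta}{v}\big)_x+\frac{\epsilon v}{2}\big(\frac{\phi_x^2}{v^2}\big)_x$ off the momentum equation and squaring, since every term on the right is now controlled. The main obstacle, in short, is the velocity–temperature coupling through the degenerate heat flux, which forces the velocity and temperature–gradient estimates to be proved together rather than in sequence.
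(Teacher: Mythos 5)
Your overall architecture is sound and you have correctly located the crux: the only genuinely coupled term is the pressure gradient, which forces $\int_0^T\!\!\int_0^1\theta_x^2\,dx\,dt$ to be controlled together with the velocity estimate because the dissipation \eqref{theta14} is too weak for small $\beta$. The paper takes a parallel but technically different route (it tests the momentum equation against $u_{xx}$ rather than $u_t$, and the energy equation against $\theta$ rather than $\theta_t$), and this difference is where your proposal develops a real gap. Testing \eqref{NSFAC-Lagrange}$_5$ against $\theta_t$ and integrating the flux term by parts produces $\int_0^1\frac{\theta^\beta\theta_x\theta_{xt}}{v}\,dx$, which is \emph{not} $\frac12\frac{d}{dt}\int_0^1\frac{\theta^\beta\theta_x^2}{v}\,dx$ up to harmless terms: commuting $\partial_t$ past the degenerate weight leaves the remainder $\frac{\beta}{2}\int_0^1\frac{\theta^{\beta-1}\theta_t\theta_x^2}{v}\,dx$, and absorbing it into $\int\theta_t^2$ by Young requires $\int_0^T\!\!\int_0^1\theta^{2\beta-2}\theta_x^4\,dx\,dt$, i.e.\ a pointwise or $\sup_t$ bound on $\theta_x$ that is not available at this stage. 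The paper circumvents exactly this in its later temperature lemma by using the multiplier $\theta^\beta\theta_t$, for which $(\theta^\beta\theta_t)_x=(\theta^\beta\theta_x)_t$ and the time derivative of $\int\frac{(\theta^\beta\theta_x)^2}{v}$ appears cleanly --- but that estimate in turn consumes $\int_0^T\max_x(u_x^4+\mu^4)\,dt$, which is bounded via $\sup_t\int u_x^2\cdot\int_0^T\!\!\int u_{xx}^2$, i.e.\ via the conclusion of the present lemma. Running it ``simultaneously'' with your $u_t$-energy therefore produces a quadratic coupling between the two quantities being estimated, which a single Gronwall argument does not close; you would need an additional smallness or continuity argument that you have not supplied.

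The fix is to notice that you only need the space-time integral $\int_0^T\!\!\int_0^1\theta_x^2\,dx\,dt$, not $\sup_t\int\theta_x^2\,dx$, so the $\theta_t$ multiplier is overkill. Multiplying \eqref{NSFAC-Lagrange}$_5$ by $\theta$ instead (as in \eqref{theta theta-x 1}) gives
\begin{equation*}
\int_0^T\!\!\int_0^1\frac{\theta^\beta\theta_x^2}{v}\,dx\,dt\le C(\delta)+C\delta\int_0^T\!\!\int_0^1\frac{u_{xx}^2}{v}\,dx\,dt,
\end{equation*}
the bad terms here being only $\int\!\!\int\theta^2|u_x|$ and $\int\!\!\int(u_x^2+\mu^2)\theta$, which are handled by \eqref{max theta^2}, \eqref{First derivative squared modulus} and the interpolation $\int_0^T\max_xu_x^2\,dt\le C(\delta)+\delta\int\!\!\int u_{xx}^2/v$; the lower bound $\theta\ge C^{-1}$ then converts this into the needed bound on $\int\!\!\int\theta_x^2$ up to an absorbable multiple of $\int\!\!\int u_{xx}^2$. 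With that replacement your $u_t$-energy (or the paper's $u_{xx}$-energy) closes; the remaining ingredients of your proposal --- the Gagliardo--Nirenberg treatment of the cubic convective term, the capillary terms via \eqref{the upper and lower bounds of the derivative for phi} and \eqref{bound of the square modulus of the third derivative for phi}, and recovering $u_{xx}$ (or $u_t$) from the momentum equation at the end --- are all correct and match the paper's use of \eqref{u-x L2}, \eqref{sup u-t L2}.
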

\begin{proof}
Multiplying \eqref{NSFAC-Lagrange}$_2$ by $u_{xx}$ and integrating the resultant over $(0,1)\times(0,T)$, by using \eqref{bound of density}, \eqref{bound of phi}, \eqref{bound of the square modulus of the first derivative}, \eqref{max theta^2}, \eqref{higher derivative energy estimation for phi}, \eqref{the upper and lower bounds of the derivative for phi},  we obtain
\begin{eqnarray}\label{u-x and u-xx 1}
% \nonumber to remove numbering (before each equation)
 && \frac{1}{2}\int_0^1u_x^2dx+\int_0^T\int_0^1\frac{u_{xx}^2}{v}dxdt\notag  \\
 &&\leq C+\frac{1}{2}\int_0^T\int_0^1\frac{u_{xx}^2}{v}dxdt+C\int_0^T\int_0^1\Big(\theta_x^2+\theta^2 v_x^2+\big|\frac{\phi_x}{v}\big|^2\big|\big(\frac{\phi_x}{v}\big)_x\big|^2+u^2_xv_x^2\Big)dxdt\notag\\
 &&\leq C+\frac{1}{2}\int_0^T\int_0^1\frac{u_{xx}^2}{v}dxdt+C\int_0^T\int_0^1\theta_x^2dx+C\int_0^T\max_{x\in[0,1]}\theta^2\int_0^1v_x^2dxdt\notag\\
 &&\ \ +C\max_{(x,t)\in[0,1]\times[0,T]}\big|\frac{\phi_x}{v}\big|^2\int_0^T\int_0^1\big|\big(\frac{\phi_x}{v}\big)_x\big|^2dxdt+C\int_0^T\max_{x\in[0,1]}u_x^2\int_0^1v_x^2dxdt\notag\\
 &&\leq C+\frac{3}{4}\int_0^T\int_0^1\frac{u_{xx}^2}{v}dxdt+C_1\int_0^T\int_0^1\frac{\theta^\beta\theta_x^2}{v}dxdt,
\end{eqnarray}
where we have used the following and the Sobolev embedding inequality in one dimension
\begin{eqnarray}\label{u-x L2}
  \int_0^T\max_{x\in[0,1]} u_x^2dt&\leq& C(\delta)\int_0^T\int_0^1u_x^2dxdt+\delta\int_0^T\int_0^1\frac{u_{xx}^2}{v}dxdt\notag\\
  &\leq& C(\delta)+\delta\int_0^T\int_0^1\frac{u_{xx}^2}{v}dxdt.
\end{eqnarray}
Multiplying \eqref{NSFAC-Lagrange}$_5$ by $\theta$, integrating the resultant over $(0,1)\times(0,T)$, by using \eqref{max theta^2}, \eqref{First derivative squared modulus} and \eqref{u-x L2}, we have
\begin{eqnarray}\label{theta theta-x 1}
% \nonumber to remove numbering (before each equation)
 && \frac{1}{2}\int_0^1\theta^2dx+\int_0^T\int_0^1\frac{\theta^\beta\theta_x^2}{v}dxdt \notag \\
 && \leq C+C\int_0^T\int_0^1\theta^2|u_x|dx+C\int_0^T\int_0^1\big(u_x^2+\mu^2\big)\theta dxdt\notag\\
 &&\leq C+C\int_0^T\int_0^1\theta u_x^2dxdt++C\int_0^T\int_0^1\theta^3dxdt+\int_0^T\max_{x\in[0,1]}\theta dt\notag\\
 &&\leq C+C\int_0^T\max_{x\in[0,1]} u_x^2dt+C\int_0^T\max_{x\in[0,1]} \theta^2dt\notag\\
 && \leq C(\delta)+C\delta\int_0^T\int_0^1\frac{u_{xx}^2}{v}dxdt.
\end{eqnarray}
From \eqref{u-x and u-xx 1} and \eqref{theta theta-x 1}, choosing $\delta$ small enough, we obtain
\begin{equation}\label{tempeture and velocity-1}
  \sup_{t\in[0,T]}\int_0^1\big(\theta^2+u_x^2\big)dx+\int_0^T\int_0^1\theta^\beta\theta_xdxdt+\int_0^T\int_0^1 u_{xx}^2dxdt\leq C.
\end{equation}
Then, rewriting \eqref{NSFAC-Lagrange}$_2$ as
\begin{equation}\label{momentum equation}
  u_t=-\big(\frac{\theta}{v}\big)_x+\frac{u_{xx}}{v}-\frac{u_x v_x}{v^2}-\epsilon\frac{\phi_x}{v}\big(\frac{\phi_x}{v}\big)_x,
\end{equation}
combining with \eqref{bound of the square modulus of the first derivative}, \eqref{max theta^2}, \eqref{the upper and lower bounds of the derivative for phi}, \eqref{u-x L2}, \eqref{bound of the square modulus of the third derivative for phi} and \eqref{tempeture and velocity-1}, we achieve
\begin{eqnarray}\label{sup u-t L2}
 \int_0^T\int_0^1u_t^2dxdt&\leq& C\int_0^T\int_0^1\Big(u_{xx}^2+u_x^2v_x^2+\theta_x^2+\theta^2v_x^2+\big|\frac{\phi_x}{v}\big|^2\big|\big(\frac{\phi_x}{v}\big)_x\big|^2\Big)dxdt\notag\\
 &\leq& C,
\end{eqnarray}
 together with \eqref{tempeture and velocity-1}, the energy inequality \eqref{energy estimate of velocity} is obtained. The proof of Lemma \ref{velocity-x} is completed.
\end{proof}

\begin{lemma}\label{theta-x}
Let $(v,u,\theta,\phi)$ be a smooth solution of \eqref{NSFAC-Lagrange}-\eqref{initial condition} on $[0,1]\times [0,T]$. Then it holds that for $\forall(x,t)\in[0,1]\times [0,T]$
 \begin{equation}\label{energy estimate of tempeture}
\sup_{0\leq t\leq T}\int_0^1\theta_{x}^2dx+\int_0^T\int_0^1\big(\theta_{t}^2+\theta^2_{xx}\big)dx\leq C.
 \end{equation}
\end{lemma}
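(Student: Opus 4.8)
The plan is to derive a differential inequality for $\int_0^1\theta_x^2\,dx$ by testing the temperature equation \eqref{NSFAC-Lagrange}$_5$ with $\theta_t$. Multiplying by $\theta_t$ and integrating over $[0,1]$, I would integrate the heat-conduction term by parts (the boundary contributions vanish by \eqref{boundary condition}); using $v_t=u_x$ this produces the identity
\[
\int_0^1\theta_t^2\,dx + \frac12\frac{d}{dt}\int_0^1\frac{\theta^\beta\theta_x^2}{v}\,dx = \frac{\beta}{2}\int_0^1\frac{\theta^{\beta-1}\theta_t\theta_x^2}{v}\,dx - \frac12\int_0^1\frac{\theta^\beta\theta_x^2 u_x}{v^2}\,dx - \int_0^1\frac{\theta u_x}{v}\theta_t\,dx + \int_0^1\Big(\frac{u_x^2}{v}+v\mu^2\Big)\theta_t\,dx .
\]
Because $\theta\ge C^{-1}$ by \eqref{bound of density} (so $\theta^{-\beta}\le C$), control of the left-hand side will in particular yield $\sup_t\int_0^1\theta_x^2\le C\sup_t\int_0^1\frac{\theta^\beta\theta_x^2}{v}$ and $\int_0^T\int_0^1\theta_t^2\le C$.

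Next I would estimate the right-hand side. The last three terms are routine: by Cauchy--Schwarz each absorbs a small multiple of $\int\theta_t^2$, leaving $\int\theta^\beta\theta_x^2|u_x|\le C\max_x|u_x|\,\int\frac{\theta^\beta\theta_x^2}{v}$, $\int\frac{\theta^2 u_x^2}{v^2}\le C\max_x\theta^2\int u_x^2$, $\int\frac{u_x^4}{v^2}\le C\max_x u_x^2\int u_x^2$, and---using $v\mu=-\phi_t$ from \eqref{NSFAC-Lagrange}$_3$---$\int v^2\mu^4=\int\mu^2\phi_t^2\le C\max_x\phi_t^2\int\mu^2$. The coefficients $\max_x|u_x|$, $\max_x\theta^2$, $\max_x\phi_t^2$ are all integrable in time by \eqref{u-x L2}, \eqref{energy estimate of velocity}, \eqref{max theta^2}, and the $\phi$-bounds \eqref{higher derivative energy estimation for phi}, \eqref{phi-t-L2}; moreover $\int u_x^2\le C$ by \eqref{energy estimate of velocity}, $\int\mu^2\le C$ and $\sup_t\int\phi_t^2\le C$ follow from \eqref{Allen-Cahn-Lagrange-1}, \eqref{higher derivative energy estimation for phi}, \eqref{bound of phi} and \eqref{the upper bound of v}. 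Hence all three reduce to $(\text{integrable in } t)\cdot\int\frac{\theta^\beta\theta_x^2}{v}+(\text{integrable in }t)$, which is exactly the shape needed for a Gronwall closure.

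The hard part is the first term $\frac{\beta}{2}\int\frac{\theta^{\beta-1}\theta_t\theta_x^2}{v}$, the genuine contribution of the degenerate nonlinear conductivity (it is absent when $\beta=0$, which is why the constant-conductivity case is easier). After Cauchy--Schwarz it reduces to bounding the nonlinear gradient term $\int_0^1\theta^{2\beta-2}\theta_x^4\,dx$, which is not dominated by the energy alone. I would handle it by one-dimensional interpolation: setting $\Phi=\tfrac{2}{\beta+2}\theta^{(\beta+2)/2}$ so that $\Phi_x^2=\theta^\beta\theta_x^2$, I get $\int\theta^{2\beta-2}\theta_x^4\le C\int\Phi_x^4$ (using $\theta^{-2}\le C$) and $\int\Phi_x^4\le\|\Phi_x\|_{L^\infty}^2\int\Phi_x^2$ with $\|\Phi_x\|_{L^\infty}^2\le C\|\Phi_x\|_{L^2}\|\Phi_{xx}\|_{L^2}+C\|\Phi_x\|_{L^2}^2$. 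The factor $\int\Phi_{xx}^2$ is, via \eqref{NSFAC-Lagrange}$_5$ solved for $\theta_{xx}$, controlled by $\int\theta_t^2$ plus the same source terms handled above, so choosing the interpolation constants small absorbs the top-order pieces into $\tfrac12\int\theta_t^2$ on the left. I expect this to be the main obstacle: what remains is a superlinear term in $a(t):=\int_0^1\frac{\theta^\beta\theta_x^2}{v}\,dx$, and closing it cleanly requires folding the already-established bounds $\int_0^T\max_x\theta^2\,dt\le C$ \eqref{max theta^2} and $\int_0^T a(t)\,dt\le C$ \eqref{tempeture and velocity-1} into the Gronwall weight, since the coefficient multiplying $a$ itself involves $a$. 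A Gronwall argument then gives $\sup_t a(t)\le C$, whence $\sup_t\int_0^1\theta_x^2\le C$; combined with $\int_0^1\theta\,dx\le C$ this yields by Sobolev embedding the pointwise upper bound $\theta\le C$ in \eqref{upper and lower bound}. Finally, with $\theta$ now bounded above and below and $\sup_t\int\theta_x^2\le C$, one reads $\int_0^T\int\theta_{xx}^2\le C$ off the equation (the remaining $\int\theta_x^4$ is controlled by $\int\theta_x^2$ and a small multiple of $\int\theta_{xx}^2$), completing the proof.
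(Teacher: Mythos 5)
There is a genuine gap, and it sits exactly where you flagged ``the main obstacle.'' Because you test \eqref{NSFAC-Lagrange}$_5$ with $\theta_t$, the term $\tfrac{\beta}{2}\int_0^1\frac{\theta^{\beta-1}\theta_t\theta_x^2}{v}\,dx$ survives, and after Cauchy--Schwarz and your interpolation ($\|\Phi_x\|_{L^\infty}^2\le C\|\Phi_x\|_{L^2}\|\Phi_{xx}\|_{L^2}+C\|\Phi_x\|_{L^2}^2$, with $\|\Phi_{xx}\|_{L^2}$ read off the equation and absorbed into $\int\theta_t^2$ by Young) the remainder is of order $a(t)^3$ where $a(t)=\int_0^1\theta^\beta\theta_x^2\,dx$. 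The resulting differential inequality $a'+\int\theta_t^2\le Ca^3+g(t)a+h(t)$ is Riccati-type: Gronwall requires the coefficient of $a$, namely $Ca^2$, to be integrable in time, and the only integrability available at this stage is $\int_0^T a\,dt\le C$ (from \eqref{tempeture and velocity-1}) and $\int_0^T\max_x\theta^2\,dt\le C$ (from \eqref{max theta^2}). Neither gives $\int_0^T a^2\,dt\le C$, so ``folding the established bounds into the Gronwall weight'' does not close the argument; for large data a cubic inequality of this form can blow up in finite time, and no continuation/bootstrap rescues it without smallness.

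The paper avoids this term entirely by choosing the multiplier $\theta^\beta\theta_t$ instead of $\theta_t$ (equivalently, $\partial_t K(\theta)$ with $K'(\theta)=\theta^\beta$, the standard multiplier for nonlinear conductivities). The point is the identity $(\theta^\beta\theta_t)_x=(\theta^\beta\theta_x)_t$, so integrating the conduction term by parts produces exactly $\tfrac12\frac{d}{dt}\int_0^1\frac{(\theta^\beta\theta_x)^2}{v}\,dx+\tfrac12\int_0^1\frac{(\theta^\beta\theta_x)^2u_x}{v^2}\,dx$ with no $\beta\theta^{\beta-1}\theta_t\theta_x^2$ term, the dissipation becomes $\int_0^1\theta^\beta\theta_t^2\,dx$ (which still controls $\int\theta_t^2$ since $\theta\ge C^{-1}$), and the Gronwall inequality for $b(t)=\int_0^1(\theta^\beta\theta_x)^2\,dx$ is \emph{linear} with the integrable weight $\int_0^1\theta^\beta\theta_x^2\,dx$; the remaining sources are handled via $\int_0^T\max_x u_x^4\,dt\le C$, $\int_0^T\max_x\mu^4\,dt\le C$ and $\max_x\theta^{2\beta+2}\le C+Cb(t)$. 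Your remaining steps (deducing $\max\theta\le C$ by Sobolev embedding, then $\sup_t\int\theta_x^2$, $\int\int\theta_t^2$, and finally $\int\int\theta_{xx}^2$ from the equation) are sound once this energy estimate is in hand, so the fix is simply to change the test function.
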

\begin{proof}
Multiplying \eqref{NSFAC-Lagrange}$_5$ by $\theta^\beta\theta_t$ and integrating the resultant over $(0,1)$, by using \eqref{bound of density}, \eqref{bound of phi}, \eqref{tempeture and velocity-1}, we have
\begin{align}\label{theta-t theta-xx}
% \nonumber to remove numbering (before each equation)
 &\frac{1}{2}\frac{d}{dt}\Big(\int_0^1\frac{(\theta^\beta\theta_x)^2}{v}dx\Big)+\int_0^1\theta^\beta\theta_t^2dx\notag \\
  & =-\frac{1}{2}\int_0^1\frac{(\theta^\beta\theta_x)^2u_x}{v^2}dx+\int_0^1\frac{\theta^\beta\theta_t\big(-\theta u_x+u_x^2+v^2\mu^2\big)}{v}dx\notag\\
  &\leq C\max_{x\in[0,1]}|u_x|\theta^{\frac{\beta}{2}}\int_0^1\theta^{\frac{3\beta}{2}}\theta_x^2dx+\frac{1}{2}\int_0^1\theta^\beta\theta_t^2dx+C\int_0^1\theta^{\beta+2}u_x^2dx+C\int_0^1\theta^\beta\big(u_x^4+\mu^4\big)dx\notag\\
  &\leq C\int_0^1\theta^{\beta}\theta_x^2dx\int_0^1(\theta^{\beta}\theta_x)^2dx+\frac{1}{2}\int_0^1\theta^{\beta}\theta_t^2dx+C\max_{x\in[0,1]}\big(\theta^{2\beta+2}+u_x^4+\mu^4\big)+C.
\end{align}
Now we deal with the term $\max_{x\in[0,1]}\big(\theta^{2\beta+2}+u_x^4+\mu^4\big)$ in the last inequality of \eqref{theta-t theta-xx}. Combining Lemma \ref{velocity-x}, direct computation shows that
\begin{eqnarray}
  &&\int_0^T\max_{x\in[0,1]}u_x^4dt\notag\\
  &\leq&C\int_0^T\int_0^1u_x^4dxdt+C\int_0^T\int_0^1|u_x^3u_{xx}|dxdt\notag\\
  &\leq&C\int_0^T\max_{x\in[0,1]}u_x^2\int_0^1u_x^2dxdt+C\int_0^T\max_{x\in[0,1]}u_x^2\Big(\int_0^1u_x^2dx\Big)^\frac{1}{2}\Big(\int_0^1u_{xx}^2dx\Big)^\frac{1}{2}dt\notag\\
   &\leq&\frac12\int_0^T\max_{x\in[0,1]}u_x^4dt+C\int_0^T\int_0^1\big(u_x^2+u_{xx}^2)dxdt\notag\\
      &\leq&\frac12\int_0^T\max_{x\in[0,1]}u_x^4dt+C,
\end{eqnarray}
so we get immediately that
\begin{equation}\label{u-x^4}
 \int_0^T\max_{x\in[0,1]}u_x^4dt\leq C.
\end{equation}
By the same way above, combining with \eqref{bound of the square modulus of the third derivative for phi}, we  have
\begin{equation}\label{mu^4}
 \int_0^T\max_{x\in[0,1]}\mu^4dt\leq C.
\end{equation}
Moreover, by using Sobolev embedding theorem, we get
\begin{equation}\label{the upper bound estimate for theta}
  \max_{x\in[0,1]}\theta^{2\beta+2}\leq C+C\int_0^1(\theta^\beta\theta_x)^2dx,
\end{equation}
Substituting \eqref{u-x^4}, \eqref{mu^4}, \eqref{the upper bound estimate for theta} into \eqref{theta-t theta-xx}, by using Gronwall's inequality and \eqref{the upper bounds of density}, we obtain
\begin{equation}\label{derivative estimation for theta}
\sup_{0\leq t\leq T}\int_0^1(\theta^{\beta}\theta_x)^2dx+\int_0^T\int_0^1\theta^\beta\theta_t^2dxdt\leq C.
\end{equation}
Therefore,  in view of \eqref{the upper bound estimate for theta}, we achieve
\begin{equation}\label{the upper bound estimate for temperature}
 \max_{(x,t)\in[0,1]\times[0,T]}\theta\leq C.
\end{equation}
Thus, both \eqref{the upper bound estimate for theta} and \eqref{derivative estimation for theta} lead to
\begin{align}
\displaystyle  \sup_{0 \le t\le T}\int_0^1\theta_{x}^2 dx+\int_0^T\int_0^1 \theta_t^2dxdt\le C.
\end{align}

Let's go back to \eqref{NSFAC-Lagrange} again, we have
\begin{equation}\label{energy conservation equation theta-xx}
\frac{\theta^\beta\theta_{xx}}{v}=\theta_t- \frac{\beta\theta^{\beta-1}\theta_x^2}{v}+\frac{\theta^\beta\theta_x v_x}{v^2}+\frac{\theta u_x}{v}-\frac{u_x^2+(v\mu)^2}{v},
\end{equation}
which yields that
\begin{align}\label{the estimate of tempture-xx L2}
\int_0^T\int_0^1\theta_{xx}^2dxdt&\leq C \int_0^T\int_0^1\big(\theta_x^4+\theta_x^2v_x^2+u_x^4+\mu^4+u_x^2+\theta_t^2\big)dxdt\notag\\
 &\leq C+C\int_0^T\max_{x\in[0,1]}\theta_x^2dt\notag\\
  &\leq C+C\int_0^T\int_0^1\theta_{xx}^2dxdt.
\end{align}
Meanwhile, by using maximum principle, $-1\leq\phi\leq1$.
The proof of Lemma \ref{theta-x} is completed.
\end{proof}

\textbf{Proof of Theorem 1.1.}  By Lemma 2.1, there exists a $T_*>0$, such that the system \eqref{NSFAC-Lagrange}-\eqref{initial condition} has a  unique strong solution $(v,u,\theta,\phi)$ on $(0,T_*]$ satisfying \eqref{local solution space}. Suppose that  $T_0$ is the maximum existence time of the unique strong solution $(v,u,\theta,\phi)$ to \eqref{NSFAC-Lagrange}-\eqref{initial condition}. Therefore, $T_0\geq T_*>0.$ We claim that
\begin{equation}\label{T0}
  T_0=+\infty.
\end{equation}
If not ,  $T_0<+\infty$, then by using  Lemma 2.2--2.9, the global a priori estimates of the solutions ensure $ (v,u,\theta,\phi)\in C([0,T_0];H^1)$ and that
\begin{equation}\label{upper bound}
  \|(v,u,\theta,\phi)(t)\|_{H^1}\leq C(T_0)<+\infty,\ \ \forall t\in[0,T_0],
\end{equation}
 where    $C(T_0) $   is a  positive constant  depending only on $T_0$,
 $\inf\limits_{x\in [0,1]}v_0(x)$, $ \inf\limits_{x\in [0,1]}\theta_0(x)$, and $\|(v_0 ,u_0,\theta_0 ,\phi_0)\|_{H^1(0,1)}$.  Thus, $(v,u,\theta,\phi)(x,T_0)$ is finite and well-defined.
 Thus, follows from  Lemma 2.1,   there exists a positive constant $T_{1}>0$  such that \eqref{NSFAC-Lagrange}-\eqref{initial condition} has a unique strong solution on $[T_0,T_0+T_1]$,
 which contradicts the definition of $T_0$. Thereby, the claim \eqref{T0} is true. The proof of Theorem 1.1 is completed.

\section*{Acknowledgement}
This research was partially supported by the National Natural Science Foundation
of China, No. 11901025 (the first author), No. 11971020(the second author) and No.
11671027(the fourth author).


\begin{thebibliography}{aa}

\bibitem{AC1979}S.M. Allen, J.W. Cahn, A microscopic theory for antiphase boundary motion and its application to antiphase domain coarsening, \textit{Acta Metallurgica}, \textbf{27}, 1085-1095, (1979).
\bibitem{AZ1989} A.A. Amosov, A.A. Zlotnik, Global generalized solutions of the equations of the one-dimensional motion of a viscous heat-conducting gas, \textit{Soviet Math. Dokl.}, \textbf{38}, 1-5, (1989).
\bibitem{AZ1992} A.A. Amosov, A.A. Zlotnik, Solvability ``in the large" of a system of equations of the one-dimensional motion of an inhomogeneous viscous heat-conducting gas. \textit{Math. Notes}, \textbf{52}, 753-763, (1992).
\bibitem{AZ1997} A.A. Amosov, A.A. Zlotnik, On the stability of generalized solutions of equations of one-dimensional motion of a viscous heat-conducting gas, \textit{Sib. Math. J.}, \textbf{38}, 663-684,(1997).
\bibitem{AKM1990} S.N. Antontsev, A.V. Kazhikhov, V.N. Monakhov, Boundary Value Problems in Mechanics of Nonhomogeneous Fluids. Amsterdam, New York: North-Holland, 1990.
\bibitem{B1999} T. Blesgen, A generalization of the Navier-Stokes equations to two-phase flows, \textit{J. Physics D}, \textbf{32}, 1119-1123, (1999).
\bibitem{CH1958}J.W. Cahn and  J.E. Hilliard, Free energy of a nonuniform system. I. Interfacial free energy, \textit{J. Chem. Phys.}, \textbf{28}, 258-267, (1958).
\bibitem{CG2017}M. Chen  and X. Guo,  Global large solutions for a coupled compressible Navier-Stokes/Allen-Cahn system with initial vacuum, \textit{Nonl. Analysis: Real World Appl.}, \textbf{37}, 350-373, (2017).
\bibitem{CWZ2019}S. Chen, H. Wen, C. Zhu, Golbal existence of weak solution to compressible Navier-Stokes/Allen-Cahn system in three dimensions. \textit{J.Math. Anal. Appl.} \textbf{477}, 1265-1295,(2019).
\bibitem{CHMS-2018}Y. Chen, Q. He, M. Mei, and X. Shi, Asymptotic stability of solutions for 1-D compressible Navier-Stokes-Cahn-Hilliard system, \textit{J. Math. Anal. Appl.}, \textbf{467}, 185-206, (2018)
\bibitem{DLL2013}S. Ding, Y. Li, and W. Luo, Global solutions for a coupled compressible Navier-Stokes/Allen-Cahn system in 1D, \textit{J. Math. Fluid Mech.}, \textbf{15}, 335-360, (2013).
\bibitem {DGZ2017} R. Duan, A. Guo, C. Zhu, Global strong solution to compressible Navier-Stokes equations with density dependent viscosity and temperature dependent heat conductivity. \textit{J. Differential Equations}, \textbf{262}, 4314-4335, (2017).
%\bibitem{ERS2016}M. Eleuteria, E. Roccabc, G. Schimperna, Existence of solutions to a two-dimensional model for nonisothermal two-phase flows of incompressible fluids, \textit{Annales de l'Institut Henri Poincare C, Analyse non lineaire}, \textbf{33}(6), 1431-1454, (2016).
\bibitem{FPRS2010} E. Feireisl, H. Petzeltov$\mathrm{\acute{a}}$, E. Rocca, G. Schimperna,  Analysis of a phase-field model for two-phase compressible fluids, \textit{Mathematical Models and Methods in Applied Sciences}, \textbf{20}(7), 1129-1160, (2010).
\bibitem{HLS} Q. He, C. Liu, and X. Shi, Numerical study of phase transition in van der Waals fluid, \textit{Discret. Contin. Dyn. Syst.(B)}, \textbf{23}, 4519-4540, (2018).
%\bibitem{HMSW-2018} Q.-L. He, M. Mei, X.-D. Shi, X.-P.Wang, Global Solution for Gas-Liquid Flow of 1-D van der Waals Equation of State with Large Initial Data, arXiv:1810.00291
\bibitem{HMR2012}M. Heida, J. Malek, and K.R. Rajagopal,On the development and generalizations of Allen-Cahn and Stefan equations within a thermodynamic framework, \textit{Z. Angew. Math. Phys.}, \textbf{63}, 759-776, (2012).
\bibitem{HSS2020}B. Huang, X. Shi, Y. Sun, Global strong solutions to compressible Navier-Stokes system with degenerate heat conductivity and density-depending viscosity, Commun. Math. Sci. preprint, (2020).
\bibitem{SH2019}B. Huang, X. Shi, Nonlinearly exponential stability of compressible Navier-Stokes system with degenerate heat-conductivity, \textit{J. Differential Equations}, (2019).
\bibitem{HLM2010}F. Huang, J. Li and A. Matsumura, Asymptotic stability of combination of viscous contact wave with rarefaction waves for one-dimensional compressible Navier-Stokes system, \textit{Arch. Ration. Mech. Anal.}, \textbf{197}, 89-116, (2010).
\bibitem{HLX2012}X. Huang, J. Li, and Z. Xin, Global well-posedness of classical solutions with large oscillations and vacuum to the three-dimensional isentropic compressible Navier-Stokes equations, \textit{Comm. Pure Appl. Math.}, \textbf{65}, 549-585, (2012).
\bibitem {K1985} B. Kawohl, Global existence of large solutions to initial-boundary value problems for a viscous, heat-conducting,one-dimensional real gas, \textit{J. Differential Equations}, \textbf{58},76-103, (1985).
\bibitem {J1994-1}S. Jiang, On initial-boundary value problems for a viscous, heat-conducting, one-dimensional real gas, \textit{J. Differential Equations}, \textbf{110}, 157-181, (1994), .
\bibitem {J1994-2}S. Jiang, On the asymptotic behavior of the motion of a viscous, heat-conducting, one-dimensional real gas, \textit{Math. Z.}, \textbf{216}, 317-336, (1994).
\bibitem{J1996} S. Jiang, Global spherically symmetric solutions to the equations of a viscous polytropic ideal gas in an exterior domain. \textit{Comm. Math. Phys.}, \textbf{178}, 339-374, (1996).
\bibitem{J1998} S. Jiang, Large-time behavior of solutions to the equations of a viscous polytropic ideal gas, \textit{Annli Mat. Pura Appl.}, \textbf{175}, 253-275, (1998).
\bibitem{K1981} A.V. Kazhikhov, To  a  theory  of  boundary  value  problems  for  equations  of  one-dimensional nonstationary  motion  of  viscous  heat-conduction  gases,  in:  Boundary  Value  Problems  for Hydrodynamical Equations, No. 50, Institute of Hydrodynamics, Siberian Branch Acad. USSR, 1981, pp. 37-62, in Russian.
\bibitem{KS1977} A.V. Kazhikhov, V.V. Shelukhin, Unique global solution with respect to time of initial boundary value problems for one-dimensional equations of a viscous gas, \textit{J. Appl. Math. Mech.}, \textbf{41}, 273-82, (1977).
\bibitem{K1987} S. Kawashima, Large-time behaviour of solutions to hyperbolic-parabolic systems of conservation laws and applications, \textit{Proc. R. Soc. Edinb. A}, \textbf{106}, 169-194, (1987).
\bibitem{K2012} M. Kotschote, Strong solutions of the Navier-Stokes Equations for a Compressible Fluid of Allen-Cahn Type, \textit{Arch. Rational Mech. Anal.}, \textbf{206}, 489-514, (2012).
\bibitem{Lions1998} P.L. Lions, Mathematical topics in fluid mechanics, Vol. 2. Compressible models, Oxford University Press, New York, 1998.
\bibitem{LT-1998} J. Lowengrub, and L. Truskinovsky, Quasi-incompressible Cahn-Hilliard fluids and topological transitions, \textit{Proc. Royal Soc. A: Math. Phys. Eng. Sci.}, \textbf{454}, 2617-2654, (1998).
\bibitem{MN1980} A. Matsumura and T. Nishida, The initial value problem for the equations of motion of viscous and heat-conductive gases, \textit{J. Math. Kyoto Univ.}, \textbf{20}, 67-104, (1980).
\bibitem {PZ2015} R. Pan, W. Zhang, Compressible Navier-Stokes equations with temperature dependent heat conductivities, \textit{Commun. Math. Sci.},  \textbf{13}, 401-425, (2015).
\bibitem{P2005} P. Pol$\mathrm{\acute{a}\check{c}}$ik, Symmetry properties of positive solutions of parabolic equations on $\mathbb{R}^n$: I. Asymptotic symmetry for the Cauchy problem,\textit{Commun. P.D.E.}, \textbf{30}, 1567-1593, (2005).
%\bibitem{SLX2009}P.-T. Sun, C.Liu, J.-C. Xu, Phase Field Model of Thermo-Induced Marangoni Effects in the Mixtures and its Numerical Simulations with Mixed Finite Element Method, \textit{Commun. Comput. Phys.}, \textbf{6}(5), 1095-1117, (2009).
\bibitem{V1894}J.D. Van der Waals, Therinodynamische theorie der kapillaritat unter voraussetzung stetiger dichteanderung, \textit{Z. Phys. Chem.}, \textbf{13}, 657-725, (1894). (English transl. in \textit{J. Stat. Phys.}, \textbf{20}, 197-200, (1979).)
\bibitem {W2003} D.H. Wang, Global solutions of the Navier-Stokes equations for viscous compressible flows, \textit{Nonlinear Anal. }, \textbf{52}, 1867-1890, (2003).
%\bibitem{Wu2017}H. Wu, Well-posedness of a diffuse-interface model for two-phase incompressible flows with thermo-induced Marangoni effect, \textit{European Journal of Applied Mathematics}, \textbf{28}(3), 380-434, (2017).

\footnotesize



\end{thebibliography}
\end{document}